\documentclass[12pt]{article}
\usepackage{amsfonts}
\usepackage{latexsym}
\usepackage{cite}
\usepackage{amsmath,amsfonts,latexsym,amssymb}
\usepackage[mathscr]{eucal}
\usepackage{cases}
\usepackage{amsthm}

\usepackage[bf,small]{caption2}
\usepackage{float}
\usepackage{graphicx}
\usepackage{amsmath}
\usepackage{amssymb}
\usepackage[all]{xy}

\newtheorem{theorem}{theorem}[section]
\newtheorem{thm}[theorem]{Theorem}
\newtheorem{lem}[theorem]{Lemma}
\newtheorem{prob}[theorem]{Problem}
\newtheorem{prop}[theorem]{Proposition}

\newtheorem{alg}[theorem]{Algorithm}

\newtheorem{defn}[theorem]{Definition}

\newtheorem{rmk}[theorem]{Remark}

\begin{document}

\title{\textbf{How to find $G$-admissible abelian regular coverings of a graph?}}
\author{\Large Haimiao Chen
\footnote{Email: \emph{chenhm@math.pku.edu.cn}}\\
\normalsize \em{Department of mathematics, Peking University, Beijing, China}\\
\Large Hao Shen
\footnote{Email: \emph{shenhao208@mails.gucas.ac.cn} \ partially supported by NSFC project 61173009}\\
\normalsize \em{Institute of mathematics, Chinese Academy of Sciences, Beijing, China}}
\date{}
\maketitle

\begin{abstract}
Given a finite connected simple graph $\Gamma$, and a subgroup $G$ of its automorphism group, a general method for finding all finite
abelian regular coverings of $\Gamma$ that admit a lift of each element of $G$ is developed.
As an application, all connected arc-transitive abelian regular coverings of the Petersen graph are classified up to isomorphism.
\end{abstract}

\noindent\emph{Keywords:} abelian, regular covering of a graph, $G$-admissible, automorphism, lift, Petersen graph

\emph{MSC2010:} 05E18.

\section{Introduction}

In this paper we develop an effective method to find all finite abelian regular coverings (up to isomorphism) of a graph, admitting lifts of a given group of automorphisms of the graph. We then apply the method to classify arc-transitive finite abelian regular coverings of the Petersen graph. This extends the work of \cite{Crite} which classified arc-transitive elementary abelian regular coverings of the Petersen graph.

Recently, people have been paying much attention to symmetries of graphs. An initial motivation to find regular coverings admitting lifts of a given group of automorphisms of the base graph is to construct infinite families of graphs with specific symmetric properties. For example, new families of 2-arc-transitive graphs were constructed in \cite{2-arc} as long as 2-arc-transitive $\mathbb{Z}_{p}^{3}$-coverings were found. A systematic method was developed in \cite{Elementary,Vertex} to find isomorphism classes of elementary abelian coverings of a graph that admit lifts of given automorphisms; the method was applied in \cite{Vertex} to classify vertex-transitive elementary abelian coverings of the Petersen graph. Besides the ones mentioned above, there are many such results, see \cite{Q3, K5, Pappus, Heawood}, and so on.

But classification of general finite abelian regular coverings of a graph is never seen.
Theoretically, as pointed out in \cite{2-arc}, classification of general finite abelian regular coverings can be reduced to that of elementary abelian ones. But the difficulty is that, when elementary abelian regular coverings are classified, one still needs to go on to classify elementary abelian regular coverings of each of these covering graphs, and go on again. One is not able to stop at a step and say that the classification of abelian regular coverings of the initial graph is completed!

Our paper makes a breakthrough. The method is based on the criterion for lifting automorphisms proposed by the first author in \cite{chm}. We reduce the problem (Problem \ref{prob:initial})
of finding abelian coverings to that (Problem \ref{prob:final}) of finding numbers and matrices satisfying certain conditions.

\vspace{4mm}

The content is organized as follows.

Section 2 is a preliminary on topological graph theory. Section 3 is a framework suitable for general graphs and automorphisms; to determine all abelian regular coverings to which some given automorphisms can be lifted, a practical method is developed and written down as Algorithm \ref{alg}.
Section 4 is devoted to classifying all arc-transitive finite abelian regular coverings of the Petersen graph, and the result is given in Theorem \ref{thm:abelian coverings of Petersen}.

\vspace{4mm}

Finally, some notational conventions.

For a commutative ring $R$, let $R^{\times}$ be the set of units of $R$. Let $R^{n,m}$ denote the set of all $n\times m$ matrices with entries in $R$. Identify $R^{1,m}$ with $R^{m}$. For a matrix $U\in R^{n,m}$, let $U_{i,j}$ denote its $(i,j)$-th entry and write $U=(U_{i,j})_{n\times m}$; the row space of the matrix $U$ is denoted as $\langle U\rangle$. Let $\textrm{GL}(m,R)$ be the set of invertible $m\times m$ matrices in $R$. For $U\in R^{n,m}, C\in\mathbb{Z}^{k,n}, K\in\mathbb{Z}^{m,l}$, it is meaningful to talk about $CU\in R^{k,m}$ and $UK\in R^{n,l}$.

We use $\Sigma_{m}$ to denote the permutation group on $m$ letters. Each $\tau\in\Sigma_{m}$ is identified with a permutation matrix which is also denoted as $\tau$ and given by $\tau_{i,j}=\delta_{i,\tau(j)}$.

For $n\in\mathbb{Z}$ with $n>1$, denote the quotient group of $\mathbb{Z}$ modulo $n$ as $\mathbb{Z}_{n}$. Actually it is a quotient ring of $\mathbb{Z}$, and is a field when $n$ is prime. Note that there is a standard bijection $f_{n}:\mathbb{Z}_{n}\rightarrow\{0,\cdots,n-1\}$. We shall often identify $\lambda\in\mathbb{Z}_{n}$ with $f_{n}(\lambda)\in\mathbb{Z}$. For $\lambda,\lambda'\in\mathbb{Z}_{n}$, we say $\lambda>\lambda'$ if $f_{n}(\lambda)>f_{n}(\lambda')$.

\section{Preliminary}

In this section we recall some terminologies on graph theory and give some necessary definitions. For more one can refer to \cite{Crite, Voltage, Graph, Enum, Lift}.

In this paper all graphs are finite, connected, and simple (there is no loop or parallel edges). For a graph $\Gamma$, let $V(\Gamma), E(\Gamma), \textrm{Ar}(\Gamma)$ be the set of vertices, edges, arcs of $\Gamma$ respectively. For any $v\in V(\Gamma)$, we denote its neighborhood by $N(v)$.
Let $\{u,v\}\in E(\Gamma)$ denote the edge connecting the
vertices $u,v$, and let $(u,v)\in\textrm{Ar}(\Gamma)$ denote the arc from $u$ to $v$. Each edge $\{u,v\}$ gives rise to two
arcs $(u,v)$ and $(v,u)$ which are opposite to each other: $(v,u)=(u,v)^{-1}$.
A \emph{walk} $W=(v_{1},\cdots,v_{n})$ is a tuple such that $(v_{i},v_{i+1})\in\textrm{Ar}(\Gamma)$, for $1\leqslant i<n$.

A \emph{covering} $\pi:\Lambda\rightarrow\Gamma$ is a map of graphs such that $\pi:V(\Lambda)\rightarrow V(\Gamma)$ is surjective, and $\pi|_{N(\tilde{v})}\rightarrow \pi|_{N(v)}$ is bijective for any $v\in V(\Gamma)$ and $\tilde{v}\in \pi^{-1}(v)$. The group $A$ of automorphisms of $\Lambda$ which fix each fiber setwise is called the \emph{covering transformation group}. The covering is called \emph{regular} if $A$ acts on each fiber transitively.

For two coverings $\pi:\Lambda\rightarrow\Gamma, \pi':\Lambda'\rightarrow\Gamma$, an \emph{isomorphism} $\pi\rightarrow\pi'$
is a pair of isomorphisms $(\alpha,\tilde{\alpha})$ with $\alpha\in\textrm{Aut}(\Gamma)$, $\tilde{\alpha}:\Lambda\rightarrow\Lambda'$
such that $\alpha\circ\pi=\pi'\circ\tilde{\alpha}$. Call $\tilde{\alpha}$ a \emph{lift} of $\alpha$ along $(\pi,\pi')$, and say that $\alpha$ is lifted to $\Lambda\rightarrow\Lambda'$. When $\alpha=\textrm{id}$, call $(\alpha,\tilde{\alpha})$ an \emph{equivalence} and say that $\pi$ is equivalent to $\pi'$.

When $\Lambda=\Lambda'$ and $\pi=\pi'$, $\tilde{\alpha}$ is called a lift of $\alpha$ along $\pi$, and $\alpha$ is said to be lifted to $\Lambda$. If $G\leqslant\textrm{Aut}(\Gamma)$ such that each $\alpha\in G$ can be lifted to $\Lambda$, then $\pi$ is said to be \emph{$G$-admissible}.

Given a finite abelian group $A$, a \emph{voltage assignment} of $\Gamma$ in $A$ is a map $\phi:\textrm{Ar}(\Gamma)\rightarrow A$ such that
$\phi(x^{-1})=-\phi(x)$ for all $x\in\textrm{Ar}(\Gamma)$.
Each voltage assignment $\phi$ determines a regular covering $\Gamma\times_{\phi}A$ of $\Gamma$, having vertex set $V(\Gamma\times_{\phi}A)=V(\Gamma)\times A$ and edge set $E(\Gamma\times_{\phi}A)=\{\{(u,g),(v,\phi(u,v)+g)\}\colon \{u,v\}\in E(\Gamma)\}$; there is a canonical covering map given by the projection onto the first coordinate.

Let $T$ be an arbitrarily chosen spanning tree of $\Gamma$. A voltage assignment $\phi$ is called \emph{$T$-reduced} if $\phi(x)=0$ for all $x\in\textrm{Ar}(T)\subset\textrm{Ar}(\Gamma)$.
It is known (see \cite{Voltage}) that every regular covering of $\Gamma$ with covering transformation group $A$ is isomorphic to $\Gamma\times_{\phi}A$ for some $T$-reduced voltage assignment $\phi$.

For any walk $W=(v_{1},\cdots,v_{n})$, the voltage of $W$ is by definition $\phi(W)=\phi(v_{1},v_{2})+\cdots+\phi(v_{n-1},v_{n})$.

Choose a base vertex $v_{0}$. For each $v\in V(\Gamma)$ there is a unique reduced walk $W(v)$ in $T$
from $v_{0}$ to $v$. For each arc $(u,v)$ let $L(u,v)=W(u)\cdot(u,v)\cdot W(v)^{-1}$.

Let $e_{1},\cdots,e_{b}$ ($b$ is the first Betti number of the graph) be all the cotree edges, and for each $i$, choose an arc $x_{i}$ from $e_{i}$. Then the first homology group $H_{1}(\Gamma;\mathbb{Z})$ is the free abelian group generated by $L(x_{1}),\cdots,L(x_{b})$. There is a well-defined homomorphism
\begin{align}
E(\phi):H_{1}(\Gamma;\mathbb{Z})\rightarrow A,  \label{eq:homo}
\end{align}
which is surjective because $\Gamma\times_{\phi}A$ is assumed to be connected.

Each automorphism $\alpha\in\textrm{Aut}(\Gamma)$ induces an automorphism
\begin{align}
\alpha_{\ast}:H_{1}(\Gamma;\mathbb{Z})\rightarrow &H_{1}(\Gamma;\mathbb{Z}). \label{iso}
\end{align}

The following proposition comes from Theorem 3 in \cite{Enum} and its proof:
\begin{prop} \label{prop:lifting iso}
Given two $A$-coverings $\Gamma\times_{\phi}A$ and $\Gamma\times_{\psi}A$. An automorphism $\beta\in\rm Aut(\Gamma)$ can be lifted to $\Gamma\times_{\phi}A\rightarrow\Gamma\times_{\psi}A$ if and only if there exists a group automorphism $\sigma:A\rightarrow A$ such that
$E(\psi)\circ\beta_{\ast}=\sigma\circ E(\phi)$, or equivalently, $\beta_{\ast}(\ker(E(\phi)))=\ker(E(\psi))$.

In particular, $\Gamma\times_{\phi}A$ is equivalent to $\Gamma\times_{\psi}A$ if and only if $\ker(E(\phi))=\ker(E(\psi))$.
\end{prop}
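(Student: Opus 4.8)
The plan is to work directly with the voltage description of the two coverings and to encode a lift as a solution of a system of voltage-matching equations over the graph. Since a lift must satisfy $\pi'\circ\tilde{\beta}=\beta\circ\pi$, any lift of $\beta$ has the form $\tilde{\beta}(v,h)=(\beta(v),F(v,h))$ for some function $F\colon V(\Gamma)\times A\to A$, and the requirement that $\tilde{\beta}$ carry edges of $\Gamma\times_{\phi}A$ to edges of $\Gamma\times_{\psi}A$ is exactly the functional equation
\[ F(v,\phi(u,v)+h)=\psi(\beta(u),\beta(v))+F(u,h) \qquad (\ast) \]
for every arc $(u,v)$ and every $h\in A$. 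Everything reduces to analyzing $(\ast)$. Before starting I would isolate the one small lemma that makes both directions match up: writing $\beta^{\ast}\psi$ for the pullback voltage $(u,v)\mapsto\psi(\beta(u),\beta(v))$, the voltage of a closed walk depends only on its homology class (a graph has no $2$-cells, so every $1$-cochain pairs with $H_{1}$), whence $E(\psi)\circ\beta_{\ast}=E(\beta^{\ast}\psi)$.

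For the \emph{only if} direction I would first show that a lift $\tilde{\beta}$ automatically intertwines the two covering transformation groups. From $\pi\circ\tilde{\beta}^{-1}=\beta^{-1}\circ\pi'$ one checks that $\tilde{\beta}a\tilde{\beta}^{-1}$ again fixes every fiber of $\pi'$ for each deck transformation $a$, so conjugation by $\tilde{\beta}$ sends the deck group $A$ of $\Gamma\times_{\phi}A$ isomorphically to that of $\Gamma\times_{\psi}A$; this produces a group automorphism $\sigma\colon A\to A$ characterized by $\tilde{\beta}(v,h+a)=(\beta(v),F(v,h)+\sigma(a))$. Putting $h=0$ gives the affine form $F(v,h)=\sigma(h)+G(v)$ with $G(v):=F(v,0)$. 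Substituting this into $(\ast)$ and cancelling $\sigma(h)$ yields
\[ G(v)-G(u)=\sigma(\phi(u,v))-\psi(\beta(u),\beta(v)) \qquad (\ast\ast) \]
for every arc. Summing $(\ast\ast)$ around any closed walk $\gamma$, the left side telescopes to $0$ while the right side becomes $\sigma(E(\phi)(\gamma))-E(\psi)(\beta_{\ast}\gamma)$ by the preliminary lemma, giving $\sigma\circ E(\phi)=E(\psi)\circ\beta_{\ast}$.

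For the \emph{if} direction I would reverse this construction. Given an automorphism $\sigma$ with $\sigma\circ E(\phi)=E(\psi)\circ\beta_{\ast}$, consider the antisymmetric $1$-cochain $\eta(u,v):=\sigma(\phi(u,v))-\psi(\beta(u),\beta(v))$; its sum around any cycle $\gamma$ equals $\sigma(E(\phi)(\gamma))-E(\psi)(\beta_{\ast}\gamma)=0$. Hence on the connected graph $\Gamma$ one may integrate $\eta$ from the base vertex along tree paths to define $G\colon V(\Gamma)\to A$ solving $(\ast\ast)$, the vanishing of the periods being precisely what guarantees path-independence. Setting $F(v,h):=\sigma(h)+G(v)$ and $\tilde{\beta}(v,h):=(\beta(v),F(v,h))$, equation $(\ast)$ holds by construction, so $\tilde{\beta}$ preserves edges; it is a bijection because $\beta$ and $\sigma$ are, and it covers $\beta$, so it is a lift. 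It then remains to record the two reformulations: since $\sigma$ is an automorphism and $E(\phi),E(\psi)$ are surjective, $\sigma\circ E(\phi)=E(\psi)\circ\beta_{\ast}$ forces $\beta_{\ast}(\ker E(\phi))=\ker E(\psi)$; conversely, matching kernels makes $E(\phi)$ and $E(\psi)\circ\beta_{\ast}$ two surjections $H_{1}\to A$ with equal kernel, so the induced map on cosets is a well-defined automorphism $\sigma$ of $A$. The final clause is the case $\beta=\mathrm{id}$, where $\beta_{\ast}=\mathrm{id}$ and the condition collapses to $\ker E(\phi)=\ker E(\psi)$.

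I expect the only genuinely delicate point to be the well-definedness of $\tilde{\beta}$ in the \emph{if} direction, namely that the period obstruction to solving $(\ast\ast)$ is exactly the homology identity $\sigma\circ E(\phi)=E(\psi)\circ\beta_{\ast}$; everything else is bookkeeping with the voltage action and the deck-group intertwining. Establishing $E(\psi)\circ\beta_{\ast}=E(\beta^{\ast}\psi)$ up front is what lets the two directions close cleanly against each other.
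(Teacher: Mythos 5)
Your argument is correct. Note that the paper does not actually prove Proposition \ref{prop:lifting iso}: it imports the statement (and its proof) from Theorem 3 of \cite{Enum}, so there is no in-paper argument to compare against, and your write-up supplies a self-contained proof along the standard voltage-theoretic lines. The key steps are all right: reducing the existence of a lift to the functional equation $(\ast)$, using the deck-group intertwining $\tilde{\beta}a\tilde{\beta}^{-1}$ to force the affine form $F(v,h)=\sigma(h)+G(v)$ (this is where connectedness is used, so that the deck group is exactly $A$), identifying the integrability obstruction for $G$ with the homological identity $\sigma\circ E(\phi)=E(\psi)\circ\beta_{\ast}$ via the cochain identity $E(\psi)\circ\beta_{\ast}=E(\beta^{\ast}\psi)$, and passing between the two reformulations (existence of $\sigma$ versus equality of kernels) using the surjectivity of $E(\phi)$ and $E(\psi)$. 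One trivial slip: substituting $F(v,h)=\sigma(h)+G(v)$ into $(\ast)$ yields $G(v)-G(u)=\psi(\beta(u),\beta(v))-\sigma(\phi(u,v))$, the negative of your $(\ast\ast)$; this propagates harmlessly through both directions (replace $\eta$ by $-\eta$ when integrating), since the vanishing-of-periods condition is invariant under negation, but the signs should be made consistent in a final version.
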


As a corollary, we obtain a theoretical criteria for lifting automorphisms. See \cite{Crite} Proposition 1.2.

\begin{prop}\label{prop:condition-for-lift}
An automorphism $\alpha\in\rm Aut(\Gamma)$ can be lifted to $\Gamma\times_{\phi}A$ if and only if there is an automorphism $\sigma:A\rightarrow A$ such that $E(\phi)\circ\alpha_{\ast}=\sigma\circ E(\phi)$, or equivalently,
$\alpha_{\ast}(\ker(E(\phi)))=\ker(E(\phi))$.
\end{prop}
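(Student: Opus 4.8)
The plan is to derive this statement as an immediate specialization of Proposition \ref{prop:lifting iso}. That proposition already settles the more general situation of lifting an automorphism $\beta$ across two possibly distinct coverings $\Gamma\times_{\phi}A$ and $\Gamma\times_{\psi}A$. Here I would simply take $\psi=\phi$ and $\beta=\alpha$, so that the source and target coverings coincide. By the definitions recalled in Section 2 — where a lift of $\alpha$ along $(\pi,\pi')$ with $\Lambda=\Lambda'$ and $\pi=\pi'$ is exactly what is meant by ``a lift of $\alpha$ to $\Gamma\times_{\phi}A$'' — the liftability condition of Proposition \ref{prop:condition-for-lift} is verbatim the liftability condition of Proposition \ref{prop:lifting iso} in this special case. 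Substituting $\psi=\phi$ into $E(\psi)\circ\beta_{\ast}=\sigma\circ E(\phi)$ gives $E(\phi)\circ\alpha_{\ast}=\sigma\circ E(\phi)$, and substituting into $\beta_{\ast}(\ker(E(\phi)))=\ker(E(\psi))$ gives $\alpha_{\ast}(\ker(E(\phi)))=\ker(E(\phi))$, which are precisely the two asserted conditions.

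To keep the argument self-contained, I would also check the equivalence of the two conditions directly. For the forward direction, if $E(\phi)\circ\alpha_{\ast}=\sigma\circ E(\phi)$ holds for some automorphism $\sigma$, then any $x\in\ker(E(\phi))$ satisfies $E(\phi)(\alpha_{\ast}(x))=\sigma(E(\phi)(x))=0$, so $\alpha_{\ast}(\ker(E(\phi)))\subseteq\ker(E(\phi))$. Conversely, if $\alpha_{\ast}(\ker(E(\phi)))=\ker(E(\phi))$, I would define $\sigma$ by the rule $\sigma(E(\phi)(x))=E(\phi)(\alpha_{\ast}(x))$; the hypothesis makes this assignment well-defined, and surjectivity of $E(\phi)$ together with that of $\alpha_{\ast}$ makes $\sigma$ a surjective endomorphism of $A$.

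The one genuinely nontrivial point — and the place where I expect to spend a line of care — is upgrading the inclusion $\alpha_{\ast}(\ker(E(\phi)))\subseteq\ker(E(\phi))$ to an equality, and dually guaranteeing that the constructed $\sigma$ is an automorphism rather than a mere endomorphism. Both are resolved by finiteness. Since $A$ is finite and $E(\phi)$ is surjective, $\ker(E(\phi))$ has finite index $|A|$ in $H_{1}(\Gamma;\mathbb{Z})$; as $\alpha_{\ast}$ is an automorphism of $H_{1}(\Gamma;\mathbb{Z})$ it preserves this index, which forces $\alpha_{\ast}(\ker(E(\phi)))=\ker(E(\phi))$, and a surjective endomorphism of the finite group $A$ is automatically bijective. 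This is the only spot where the hypotheses (finiteness of $A$, and connectedness of the covering forcing surjectivity of $E(\phi)$) are actually used; everything else is a formal substitution into Proposition \ref{prop:lifting iso}.
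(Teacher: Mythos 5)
Your proof is correct and follows the same route as the paper, which obtains Proposition \ref{prop:condition-for-lift} simply as the specialization $\psi=\phi$, $\beta=\alpha$ of Proposition \ref{prop:lifting iso} (the paper offers no further argument beyond citing this as a corollary). Your additional verification that the two stated conditions are equivalent, using finiteness of $A$ and surjectivity of $E(\phi)$, is a sound elaboration of the ``or equivalently'' clause that the paper leaves implicit.
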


\section{General framework}

\subsection{Main goal}

We want to solve the following
\begin{prob} \label{prob:initial}
\rm Given a graph $\Gamma$ and a subgroup $G$ of $\rm Aut(\Gamma)$, find all $G$-admissible finite abelian regular coverings of $\Gamma$, up to isomorphism.
\end{prob}
\begin{defn} \label{defn}
\rm Given voltage assignments $\phi_{i}:\textrm{Ar}(\Gamma)\rightarrow A_{i}, 1\leqslant i\leqslant m$, there is a product voltage assignment $\phi=\prod\limits_{i=1}^{m}\phi_{i}:\textrm{Ar}(\Gamma)\rightarrow \prod\limits_{i=1}^{m}A_{i}$, given by
$$\phi(x)=(\phi_{1}(x),\cdots,\phi_{m}(x)).$$
Call $\Gamma\times_{\phi}(\prod\limits_{i=1}^{m}A_{i})$ the \emph{fibered product} of the coverings $\Gamma\times_{\phi_{i}}A_{i}$.
\end{defn}

Suppose $A=\prod\limits_{p\in I}A^{(p)}$ where $I$ is a finite set of prime numbers, and for each $p\in I$, $A^{(p)}=\prod\limits_{\eta=1}^{n_{p}}\mathbb{Z}_{p^{k(p,\eta)}}$ with $k(p,1)\geqslant\cdots\geqslant k(p,n_{p})\geqslant 1$.

Suppose $T$ is a spanning tree of $\Gamma$ as in Section 2, and $\phi:\textrm{Ar}(\Gamma)\rightarrow A$ is a voltage assignment which is assumed to be $T$-reduced as we always do. For each $p\in I$, let $q^{(p)}:A\rightarrow A^{(p)}$ denote the projection, and let $\phi^{(p)}=q^{(p)}\circ\phi$; call $\Gamma\times_{\phi^{(p)}}A^{(p)}$ the \emph{$p$-primary part} of $\Gamma\times_{\phi}A$.

It is easy to see that $E(\phi)$ is surjective if and only if $E(\phi^{(p)})$ is for all $p$, and $\alpha_{\ast}(\ker(E(\phi)))=\ker(E(\phi))$ if and only if $\alpha_{\ast}(\ker(E(\phi^{(p)})))=\ker(E(\phi^{(p)}))$ for all $p$. Hence we have
\begin{lem} \label{lem:reduction to primary}
The regular covering $\Gamma\times_{\phi}A$ is the fibered product of its primary parts; it admits a lift of $\alpha$ if and only if each primary part does.
\end{lem}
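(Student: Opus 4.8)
The plan is to dispose of the two assertions in turn: the fibered-product statement is essentially definitional, while the lifting equivalence reduces to the elementary fact that the primary components of a finite abelian group are characteristic subgroups.

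First I would unwind the fibered-product claim. Under the identification $A=\prod_{p\in I}A^{(p)}$ the maps $q^{(p)}$ are precisely the coordinate projections, so for every arc $x$ one has $\phi(x)=(q^{(p)}\phi(x))_{p\in I}=(\phi^{(p)}(x))_{p\in I}$. By Definition \ref{defn} this says exactly that $\phi=\prod_{p\in I}\phi^{(p)}$, whence $\Gamma\times_{\phi}A$ is the fibered product of the coverings $\Gamma\times_{\phi^{(p)}}A^{(p)}$. Because the homomorphism $E$ of \eqref{eq:homo} is additive in the arc-voltages and natural in the coefficient group, the same coordinatewise description passes to homology: $q^{(p)}\circ E(\phi)=E(q^{(p)}\circ\phi)=E(\phi^{(p)})$ for each $p$, so that $E(\phi)=(E(\phi^{(p)}))_{p\in I}$ as a map $H_{1}(\Gamma;\mathbb{Z})\to\prod_{p}A^{(p)}$. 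I would record this identity explicitly, since it drives the rest of the argument (and immediately yields $\ker(E(\phi))=\bigcap_{p}\ker(E(\phi^{(p)}))$, matching the kernel reformulation in Proposition \ref{prop:condition-for-lift}).

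For the lifting equivalence I would apply Proposition \ref{prop:condition-for-lift} on both sides. If $\alpha$ lifts to each primary part, choose $\sigma^{(p)}\in\mathrm{Aut}(A^{(p)})$ with $E(\phi^{(p)})\circ\alpha_{\ast}=\sigma^{(p)}\circ E(\phi^{(p)})$ and set $\sigma=\prod_{p}\sigma^{(p)}\in\mathrm{Aut}(A)$; projecting by $q^{(p)}$ and using $q^{(p)}\circ E(\phi)=E(\phi^{(p)})$ shows $q^{(p)}\circ(\sigma\circ E(\phi))=q^{(p)}\circ(E(\phi)\circ\alpha_{\ast})$ for every $p$, and since the $q^{(p)}$ jointly separate points this gives $\sigma\circ E(\phi)=E(\phi)\circ\alpha_{\ast}$, so $\alpha$ lifts to $\Gamma\times_{\phi}A$. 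Conversely, if $\alpha$ lifts to the whole covering, pick $\sigma\in\mathrm{Aut}(A)$ with $E(\phi)\circ\alpha_{\ast}=\sigma\circ E(\phi)$. Here each $A^{(p)}$ is a characteristic subgroup of $A$ (it is the set of elements whose order is a power of $p$), so $\sigma$ restricts to $\sigma^{(p)}:=\sigma|_{A^{(p)}}\in\mathrm{Aut}(A^{(p)})$ and satisfies $q^{(p)}\circ\sigma=\sigma^{(p)}\circ q^{(p)}$. Composing the identity with $q^{(p)}$ then gives $E(\phi^{(p)})\circ\alpha_{\ast}=\sigma^{(p)}\circ E(\phi^{(p)})$, so $\alpha$ lifts to the $p$-primary part.

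The only step with genuine content is the claim that $\sigma$ respects the primary decomposition, i.e.\ that each $A^{(p)}$ is characteristic in $A$; this is where the converse direction could fail to be purely formal. It holds because an automorphism of a finite abelian group preserves the order of every element and hence preserves the $p$-torsion subgroup $A^{(p)}$, equivalently because $\mathrm{Aut}(A)=\prod_{p}\mathrm{Aut}(A^{(p)})$ under the primary decomposition. Everything else is naturality of $E$ and componentwise bookkeeping, so I expect the write-up to be short once this characteristic-subgroup fact is isolated.
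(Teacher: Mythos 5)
Your proof is correct and follows essentially the same route as the paper, which simply asserts (via Proposition \ref{prop:condition-for-lift}) that $\alpha_{\ast}$ preserves $\ker(E(\phi))$ if and only if it preserves each $\ker(E(\phi^{(p)}))$; your use of the $\sigma$-formulation together with the fact that the primary components are characteristic is the same underlying argument, just spelled out where the paper says ``it is easy to see.'' No gaps.
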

  Without loss of generality, from now on we assume that $A$ is an abelian $p$-group for some prime $p$: $A=\prod\limits_{\eta=1}^{n}\mathbb{Z}_{p^{k(\eta)}}$ with
$k(1)\geqslant\cdots\geqslant k(n)\geqslant 1$.

For any voltage assignment $\phi:\textrm{Ar}(\Gamma)\rightarrow A$, it is clear that $p^{k(1)}\mathbb{Z}^{b}\leqslant\ker E(\phi)$. Let
\begin{align}
\overline{\ker}(E(\phi))=\ker E(\phi)/p^{k(1)}\mathbb{Z}^{b}\leqslant\mathbb{Z}_{p^{k(1)}}^{b}.
\end{align}
\begin{defn}
\rm A \emph{$G$-admissible pair} $(k,K)$ consists of a positive integer $k$ and a subgroup $K\leqslant\mathbb{Z}_{p^{k}}^{b}$ such that $\alpha_{\ast}(K)=K$ for all $\alpha\in G$, where $\alpha_{\ast}$ is the induced action of $\alpha\in\rm Aut(\Gamma)$ on $H_{1}(\Gamma;\mathbb{Z}_{p^{k}})\cong\mathbb{Z}_{p^{k}}^{b}$.

Say two pairs $(k,K)$ and $(k',K')$ are \emph{isomorphic} if $k=k'$ and there exists $\beta\in\textrm{Aut}(\Gamma)$ such that $\beta_{\ast}(K)=K'$.
\end{defn}
\begin{rmk} \label{rmk:pair}
\rm From Proposition \ref{prop:lifting iso} and Proposition \ref{prop:condition-for-lift}, we see that Problem \ref{prob:initial} can be reduced to finding $G$-admissible pairs up to isomorphism.

Be careful that when $(k,K)$ is $G$-admissible, it is not guaranteed that $(k,\beta_{\ast}(K))$ also is, unless $G$ is normal in $\textrm{Aut}(\Gamma)$.
\end{rmk}

\subsection{An algebraic preparation}

Let $p$ be a prime number, $k\geqslant 1$, and let $R=\mathbb{Z}_{p^{k}}$.
\begin{defn}
\rm For $\lambda\in R$, the \emph{$p$-degree} of $\lambda$, denoted $\deg_{p}(\lambda)$, is the unique integer $r$, $0\leqslant r\leqslant k$
such that $\lambda=p^{r}\cdot\chi$, with $\chi\in R^{\times}$.
\end{defn}

The following theorem, proved in \cite{chm}, plays a fundamental role. We include the proof here for the reader's convenience.
\begin{thm} \label{thm:foundation}
For each $X\in R^{n,m}$, there exist $Q\in\rm GL(\it n,R)$, $S\in\rm GL(\it m,R)$ such that $X^{0}:=QXS$ is ``in normal form",
that is, $(X^{0})_{i,j}=\delta_{i,j}\cdot p^{r_{i}}$, with $0\leqslant r_{1}\leqslant\cdots\leqslant r_{n}\leqslant k$.
\end{thm}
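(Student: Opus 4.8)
To prove Theorem~\ref{thm:foundation} I would imitate the reduction to Smith normal form, the whole argument resting on the special structure of $R=\mathbb{Z}_{p^{k}}$: it is a \emph{chain ring}, meaning its ideals are totally ordered as $R\supsetneq pR\supsetneq\cdots\supsetneq p^{k}R=0$. Consequently, for $\lambda,\mu\in R$ one has $\lambda\mid\mu$ if and only if $\deg_{p}(\lambda)\leqslant\deg_{p}(\mu)$ (note that the definition forces $\deg_{p}(0)=k$, since $p^{k}=0$). The point of the theorem is that left multiplication by $Q\in\textrm{GL}(n,R)$ realizes arbitrary invertible row operations and right multiplication by $S\in\textrm{GL}(m,R)$ realizes invertible column operations: row/column permutations, addition of a multiple of one row (resp.\ column) to another, and scaling by a unit are all of this form. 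So it suffices to bring $X$ into the asserted diagonal shape by such operations, and I would do this by induction on $\min(n,m)$, treating $X=0$ separately (there one takes $Q,S$ to be identities and all $r_{i}=k$, consistent since $0=p^{k}$).

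Assume then $X\neq 0$ and let $r_{1}=\min\{\deg_{p}(X_{i,j})\colon X_{i,j}\neq 0\}$ be the least $p$-degree occurring. First I would apply a row permutation and a column permutation to move an entry of $p$-degree $r_{1}$ into position $(1,1)$, so that $X_{1,1}=p^{r_{1}}\chi$ with $\chi\in R^{\times}$; scaling the first row by $\chi^{-1}$ then makes $X_{1,1}=p^{r_{1}}$. Now comes the clearing step. Since $r_{1}$ is globally minimal, every entry has $p$-degree $\geqslant r_{1}$ and is therefore divisible by $p^{r_{1}}=X_{1,1}$, so I may write $X_{i,1}=p^{r_{1}}a_{i}$ and $X_{1,j}=p^{r_{1}}b_{j}$. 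Subtracting $a_{i}$ times the first row from the $i$-th row ($i\geqslant 2$) kills the rest of the first column and replaces the $(i,j)$-entry ($j\geqslant 2$) by $X_{i,j}-a_{i}p^{r_{1}}b_{j}$; subtracting $b_{j}$ times the (now cleared) first column from the $j$-th column ($j\geqslant 2$) then kills the rest of the first row without disturbing these block entries. Both $X_{i,j}$ and $a_{i}p^{r_{1}}b_{j}$ have $p$-degree $\geqslant r_{1}$, so every entry of the resulting $(n-1)\times(m-1)$ block has $p$-degree $\geqslant r_{1}$.

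Applying the induction hypothesis to this block yields invertible matrices diagonalizing it as $\textrm{diag}(p^{r_{2}},\dots)$ with $r_{2}\leqslant r_{3}\leqslant\cdots$; because every block entry already had $p$-degree $\geqslant r_{1}$, the minimal degree appearing in the block is $\geqslant r_{1}$, so the induction produces $r_{1}\leqslant r_{2}$. Extending the block's transforming matrices by a direct sum with a $1\times 1$ identity and composing with all the earlier operations gives $Q\in\textrm{GL}(n,R)$ and $S\in\textrm{GL}(m,R)$ with $QXS$ in normal form and $0\leqslant r_{1}\leqslant\cdots\leqslant r_{n}\leqslant k$. The main obstacle is that $R$ is not an integral domain, so the classical Euclidean/gcd arguments over a PID are unavailable; the fix is to replace ``gcd'' by ``minimal $p$-degree'' throughout. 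Both delicate points—that the pivot divides every entry, and that clearing never creates an entry of smaller $p$-degree—rely precisely on the total ordering of the ideals of $R$, and it is the ordering claim $r_{1}\leqslant r_{2}\leqslant\cdots$ that most needs care; it holds exactly because the pivot is chosen of globally minimal $p$-degree at each stage.
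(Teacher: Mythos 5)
Your proposal is correct and follows essentially the same route as the paper's proof: pick a pivot of globally minimal $p$-degree, move it to the $(1,1)$ position, normalize by a unit, clear with row and column operations, and recurse on the complementary minor. The only differences are cosmetic (you clear the first row at each stage while the paper defers all column clearing to the end, and you spell out the chain-ring divisibility and the monotonicity $r_{1}\leqslant r_{2}$ a bit more explicitly), so no further changes are needed.
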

\begin{proof}
If $X=0$, there is nothing to show. So let us assume $X\neq 0$.

Choose an entry $X_{i_{1},j_{1}}\neq 0$ with smallest $p$-degree, and suppose $X_{i_{1},j_{1}}=p^{r_{1}}\cdot\chi$ with $\chi$ invertible. Interchange the $i_{1}$-th row of $X$ with the first row, and the $j_{1}$-th column with the first column, and divide the first row by $\chi$. The matrix obtained has $(1,1)$-entry $p^{r_{1}}$, dividing all entries. Then perform row-transformations to eliminate the $(i,1)$-entries for $1<i\leqslant n$. Denote the new matrix by $X^{(1)}$.

Do the same thing to the $(n-1)\times(m-1)$ down-right minor of $X^{(1)}$, and then go on. At each step, we can take row transformations to eliminate elements in a column below the main diagonal, and rearrange the diagonal elements. At $l$-th step for some $l\leqslant\min\{m,n\}$, we get some matrix $X^{(l)}$ whose $i$-th diagonal entry is $p^{r_{i}}$ for $1\leqslant i\leqslant l$, with $r_{1}\leqslant\cdots\leqslant r_{l}$, and the elements under the main diagonal all vanish.

Finally, perform column transformations to $X^{(l)}$, to eliminate all the ``off-diagonal" entries. Then the resulting matrix is in normal form.
\end{proof}

\begin{rmk} \label{rmk:normal form}
\rm From the proof it is easy to see that $S$ can be chosen to be $\omega S'$ for some upper-triangular matrix $S'$ and some
permutation matrix $\omega$.
\end{rmk}
\begin{thm} \label{thm:subgroup}
Each subgroup $C\leqslant R^{m}$ is of the form $\langle PQ\omega\rangle$ such that (a) $P\in R^{l,m}$,
$l\leqslant m$, $P_{i,j}=\delta_{i,j}\cdot p^{r_{i}}, 0\leqslant r_{1}\leqslant\cdots\leqslant r_{l}<k$; (b) $Q\in\rm GL(\it m,R)$
is upper-triangular with diagonal 1's, and $0\leqslant Q_{i,j}<p^{r_{j}-r_{i}}$ for all $j>i$, (by convention set $r_{i}=k$ for $i>l$); (c) $\omega\in\Sigma_{m}$. Moreover, $C\cong\mathbb{Z}_{p^{k-r_{l}}}\times\cdots\times\mathbb{Z}_{p^{k-r_{1}}}$, hence $P$ is uniquely determined by $C$.
\end{thm}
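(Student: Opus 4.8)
The plan is to produce the claimed normal form by applying Theorem \ref{thm:foundation} to a generating matrix of $C$ and then carrying out a Hermite-type reduction of the resulting unipotent factor; the isomorphism type and the uniqueness of $P$ will then fall out of the structure theorem for finite abelian $p$-groups.

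First I would pick any $X\in R^{n,m}$ whose rows generate $C$, so $C=\langle X\rangle$. By Theorem \ref{thm:foundation} there are $Q_{0}\in\mathrm{GL}(n,R)$ and $S\in\mathrm{GL}(m,R)$ with $Q_{0}XS=X^{0}$ in normal form, and by Remark \ref{rmk:normal form} we may write $S=\omega_{0}S'$ with $S'$ upper-triangular and $\omega_{0}$ a permutation matrix; hence $X=Q_{0}^{-1}X^{0}(S')^{-1}\omega_{0}^{-1}$. Since left multiplication by an invertible matrix and scaling a row by a unit leave the row space unchanged, while right multiplication by $\omega_{0}^{-1}$ merely permutes coordinates, I obtain $C=\langle X^{0}(S')^{-1}\rangle\,\omega_{0}^{-1}$. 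Discarding the zero rows of $X^{0}$ (the diagonal entries equal to $p^{k}=0$) leaves exactly the matrix $P$ with rows $p^{r_{i}}e_{i}$, $1\leqslant i\leqslant l$, $r_{i}<k$, so $C=\langle P(S')^{-1}\rangle\,\omega_{0}^{-1}$. As $(S')^{-1}$ is upper-triangular with unit diagonal, scaling the $i$-th row by the inverse of its $(i,i)$-entry turns $P(S')^{-1}$ into $PQ'$ with $Q'$ upper-triangular unipotent; thus $C=\langle PQ'\rangle\,\omega_{0}^{-1}$, which already has the shape (a), (c) with $\omega=\omega_{0}^{-1}$.

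It remains to enforce the size bounds (b). Here I would run the usual echelon reduction: for $i<j\leqslant l$ the pivot of row $j$ is $p^{r_{j}}=p^{r_{i}}\cdot p^{r_{j}-r_{i}}$, so subtracting a suitable multiple of row $j$ from row $i$ replaces the $(i,j)$-entry $p^{r_{i}}Q'_{i,j}$ by $p^{r_{i}}(Q'_{i,j}-c\,p^{r_{j}-r_{i}})$, i.e. reduces $Q_{i,j}$ modulo $p^{r_{j}-r_{i}}$ into $[0,p^{r_{j}-r_{i}})$; processing the columns $j$ of each row in increasing order guarantees that subtracting row $j$ (which vanishes in columns $<j$) never disturbs an already-reduced entry. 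For the non-pivot columns $j>l$ one invokes the convention $r_{j}=k$: since $p^{r_{i}}\cdot p^{k-r_{i}}=0$, the entry $p^{r_{i}}Q'_{i,j}$ depends only on $Q'_{i,j}\bmod p^{k-r_{i}}$, so replacing $Q'_{i,j}$ by its representative in $[0,p^{k-r_{i}})$ changes neither $PQ'$ nor $C$. The resulting $Q$ satisfies (b), giving $C=\langle PQ\omega\rangle$. I expect this bookkeeping — ordering the operations so that all bounds hold simultaneously — to be the only genuinely fiddly point, since the conceptual content is already packaged in Theorem \ref{thm:foundation}.

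Finally, for the ``moreover'' part, since $Q$ and $\omega$ are invertible the map $v\mapsto vQ\omega$ is an automorphism of $R^{m}$ carrying $\langle P\rangle$ onto $C$, so $C\cong\langle P\rangle=\bigoplus_{i=1}^{l}\langle p^{r_{i}}e_{i}\rangle$; as $p^{r_{i}}$ has additive order $p^{k-r_{i}}$ in $\mathbb{Z}_{p^{k}}$, this yields $C\cong\mathbb{Z}_{p^{k-r_{l}}}\times\cdots\times\mathbb{Z}_{p^{k-r_{1}}}$. The fundamental theorem of finite abelian $p$-groups then shows the multiset $\{k-r_{i}:1\leqslant i\leqslant l\}$ is an invariant of $C$; since $k$ is fixed and $r_{1}\leqslant\cdots\leqslant r_{l}$ is ordered, both $l$ and the $r_{i}$ are determined by $C$, hence so is $P$.
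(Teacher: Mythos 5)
Your proof is correct and follows essentially the same route as the paper's: apply Theorem \ref{thm:foundation} together with Remark \ref{rmk:normal form} to a generating matrix, discard the zero rows to obtain $P$, convert $P(S')^{-1}$ into $PQ$ by unit scalings and division-with-remainder row operations, and deduce the isomorphism type from $\langle P\rangle$ via the invertibility of $Q\omega$. The paper is terser at the reduction step and phrases the final isomorphism as the map $(PQ\omega)_{i}\mapsto P_{i}$ rather than as the restriction of $v\mapsto vQ\omega$, but the content is identical.
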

\begin{proof}
Choose a set of generators $\{X_{1},\cdots,X_{l'}\}$ of $C$ and make the matrix $X\in R^{l',m}$, with row vectors $X_{1},\cdots, X_{l'}$. By Theorem \ref{thm:foundation} and Remark \ref{rmk:normal form}, there exist $Y\in\textrm{GL}(l',R), S'\in\textrm{GL}(m,R),\omega'\in\Sigma_{m}$ such that $S'$ is upper-triangular, and $P':=YX\omega S'$ takes the form $P'_{i,j}=\delta_{i,j}\cdot p^{r_{i}}$, with $0\leqslant r_{1}\leqslant\cdots\leqslant r_{l'}\leqslant k$. Let $l$ be the maximal $i$ with $r_{i}<k$, and let $P$ be the matrix obtained by taking the first $l$ rows of $P'$. Note that via scalar multiplication and division with remainder, we are able to take elementary row transformations to convert $P{S'}^{-1}$ into $PQ$ for some $Q\in\textrm{GL}(m,R)$ satisfying (b). Then $C=\langle X\rangle=\langle YX\rangle=\langle PQ\omega\rangle$ with $\omega=\omega'^{-1}$.

Now verify the last assertion. For a general matrix $B$, let $B_{i}$ denote its $i$-th row vector. Define a homomorphism $f:C=\langle PQ\omega\rangle\rightarrow\langle P\rangle\cong\mathbb{Z}_{p^{k-r_{l}}}\times\cdots\times\mathbb{Z}_{p^{k-r_{1}}}$ by sending $(PQ\omega)_{i}$ to $P_{i}$, $1\leqslant i\leqslant l$. It is well-defined, because for all $w=(w_{1},\cdots,w_{l})\in\mathbb{Z}^{l}$, we have $wPQ\omega=0$ if and only if $wP=0$. Clearly $f$ is an isomorphism.
\end{proof}
\begin{defn} \label{defn:P-equivalence}
\rm Suppose $l\leqslant m$, $P\in R^{l,m}$ with $P_{i,j}=\delta_{i,j}\cdot p^{r_{i}}$ with $0\leqslant r_{1}\leqslant\cdots\leqslant r_{l}<k$.
Two matrices $M,M'\in\textrm{GL}(m,R)$ are called \emph{$P$-equivalent} and denoted as $M\sim_{P}M'$, if $\langle PM\rangle=\langle PM'\rangle$.
\end{defn}

\begin{lem} \label{lem:equivalent}
Let $P$ be given in Definition \ref{defn:P-equivalence}.
Then  $M\sim_{P}M'$ is equivalent to $\deg_{p}(MM'^{-1})_{i,j}\geqslant r_{j}-r_{i}$ for all $j>i$, (set $r_{l+1}=\cdots =r_{m}=k$ by convention).

\end{lem}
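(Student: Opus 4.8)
The plan is to pass from the two matrices to their ``quotient'' and reduce the equality of row spaces to a single inclusion. Set $N=MM'^{-1}\in\textrm{GL}(m,R)$. Right multiplication by the invertible matrix $M'^{-1}$ is a group automorphism of $R^{m}$, and it carries $\langle PM\rangle$ onto $\langle PMM'^{-1}\rangle=\langle PN\rangle$ and $\langle PM'\rangle$ onto $\langle P\rangle$. Hence $M\sim_{P}M'$, i.e. $\langle PM\rangle=\langle PM'\rangle$, holds if and only if $\langle PN\rangle=\langle P\rangle$. So it suffices to prove, for $N\in\textrm{GL}(m,R)$, that $\langle PN\rangle=\langle P\rangle$ is equivalent to $\deg_{p}(N_{i,j})\geqslant r_{j}-r_{i}$ for all $j>i$.

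Next I would describe both groups coordinatewise. Writing $N_{i}$ for the $i$-th row of $N$, the rows of $PN$ are $p^{r_{i}}N_{i}$ for $1\leqslant i\leqslant l$, while $\langle P\rangle=\{v\in R^{m}\colon v_{j}\in p^{r_{j}}R\ \text{for all}\ j\}$, using the convention $r_{j}=k$ (so $p^{r_{j}}R=0$) for $j>l$. Therefore $\langle PN\rangle\subseteq\langle P\rangle$ is exactly the requirement that $p^{r_{i}}N_{i,j}\in p^{r_{j}}R$ for every $i\leqslant l$ and every $j$, i.e. $\deg_{p}(p^{r_{i}}N_{i,j})=\min(r_{i}+\deg_{p}(N_{i,j}),k)\geqslant r_{j}$. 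Since $r_{j}\leqslant k$, this simplifies to $\deg_{p}(N_{i,j})\geqslant r_{j}-r_{i}$. For $j\leqslant i$ one has $r_{j}\leqslant r_{i}$, so the condition is automatic; the genuine content is precisely the system for $j>i$. Thus $\langle PN\rangle\subseteq\langle P\rangle$ is equivalent to the stated $p$-degree inequalities (and feeding the convention in at $j>l$ reproduces the vanishing of those coordinates).

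The remaining point, which I expect to be the only non-formal step, is to upgrade the inclusion to an equality, and this is where the invertibility of $N$ must be used. The map $v\mapsto vN$ is a group automorphism of $R^{m}$ sending $\langle P\rangle$ bijectively onto $\langle PN\rangle$; hence $\langle PN\rangle$ and $\langle P\rangle$ are isomorphic finite groups and in particular have the same order $\prod_{i=1}^{l}p^{\,k-r_{i}}$. An inclusion of two finite subgroups of equal cardinality is an equality, so $\langle PN\rangle\subseteq\langle P\rangle$ already forces $\langle PN\rangle=\langle P\rangle$.

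Combining the three steps gives both directions at once. If the degree condition holds, then $\langle PN\rangle\subseteq\langle P\rangle$ by the coordinate computation, hence $\langle PN\rangle=\langle P\rangle$ by the cardinality argument, hence $M\sim_{P}M'$. Conversely, if $M\sim_{P}M'$ then $\langle PN\rangle=\langle P\rangle$, so in particular $\langle PN\rangle\subseteq\langle P\rangle$, which yields the degree inequalities for all $j>i$. Everything except the equal-order argument is routine bookkeeping with $p$-degrees in $R=\mathbb{Z}_{p^{k}}$.
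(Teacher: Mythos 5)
Your argument is correct and follows essentially the same route as the paper's proof: reduce to $N=MM'^{-1}$, use the fact that $w\mapsto wN$ is an isomorphism of $\langle P\rangle$ onto $\langle PN\rangle$ so that equal cardinality turns the inclusion $\langle PN\rangle\leqslant\langle P\rangle$ into an equality, and then translate that inclusion into the $p$-degree inequalities via the coordinatewise description of $\langle P\rangle$. You merely spell out the degree computation and the automatic case $j\leqslant i$ in more detail than the paper does.
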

\begin{proof}
Define a homomorphism $\langle P\rangle\rightarrow\langle P(MM'^{-1})\rangle$ by $w\mapsto w(MM'^{-1})$. This is an isomorphism, hence
the finite group $\langle P(MM'^{-1})\rangle$ contains as many elements as $\langle P\rangle$ does. So $M\sim_{P}M'$, which is equivalent to $\langle P(MM'^{-1})\rangle=\langle P\rangle$, holds if and only if
$\langle P(MM'^{-1})\rangle\leqslant\langle P\rangle$. The later condition is equivalent to $\deg_{p}(MM'^{-1})_{i,j}\geqslant r_{j}-r_{i}$ for all $j>i$,
using the characterization that $(w_{1},\cdots,w_{m})\in\langle P\rangle$ if and only if $\deg_{p}(w_{j})\geqslant r_{j}$ for all $j$.
\end{proof}

\subsection{The method of finding admissible abelian coverings}

Recall Remark \ref{rmk:pair} that we are going to find $G$-admissible pairs $(k,K)$ up to isomorphism. For each $k$, there is always an admissible pair $(k,\mathbf{0})$ where $\mathbf{0}$ is the trivial subgroup. In the rest of this section, let us assume $K\neq\mathbf{0}$.

For $\alpha\in\textrm{Aut}(\Gamma)$, let $S^{\alpha}\in\textrm{GL}(b,\mathbb{Z})$ denote the matrix corresponding to the induced isomorphism (\ref{iso}) of $\alpha$, so that
\begin{align}
\alpha_{\ast}(L(x_{i}))=\sum\limits_{j=1}^{b}(S^{\alpha})_{i,j}L(x_{j}), \hspace{5mm} i=1,\cdots,b. \label{induced matrix}
\end{align}
Abusing the notation, we denote the image of $S^{\alpha}$ under the homomorphism
$\textrm{GL}(b,\mathbb{Z})\rightarrow\textrm{GL}(b,\mathbb{Z}_{p^{k}})$
induced by the quotient map $\mathbb{Z}\rightarrow\mathbb{Z}_{p^{k}}$ also by $S^{\alpha}$. Now if $K=\langle PQ\omega\rangle\leqslant\mathbb{Z}_{p^{k}}^{b}$, then $\alpha_{\ast}(K)=\langle PQ\omega S^{\alpha}\rangle$.

\begin{defn}
\rm A \emph{$G$-admissible solution} is a tuple $(p;k,l;r_{1},\cdots,r_{l};Q,\omega)$ consisting of integers $k\geqslant 1$, $0<l<b, 0\leqslant r_{1}\leqslant\cdots\leqslant r_{l}<k$, a prime numbers $p$, matrices $Q\in\textrm{GL}(b,\mathbb{Z}_{p^{k}}), \omega\in\Sigma_{b}$, satisfying the following condition (by convention set $r_{i}=k$ for $l<i\leqslant b$):
$$
\leqno(\star)\hspace{3mm}
\begin{split}
&Q \text{ is upper-triangular with diagonal }1's, 0\leqslant Q_{i,j}<p^{r_{j}-r_{i}} \text{for all }i<j; \\
&\text{ for all } \alpha\in G, Q\omega S^{\alpha}\sim_{P}Q\omega, \text{ where } P\in\mathbb{Z}_{p^{k}}^{l,b}, P_{i,j}=\delta_{i,j}\cdot p^{r_{i}}.
\end{split}
$$
Call two solutions $(p;k,l;r_{1},\cdots,r_{l};Q,\omega)$ and $(p';k',l';r'_{1},\cdots,r'_{l};Q',\omega')$ \emph{isomorphic} if $p=p', k=k', l=l', r_{i}=r'_{i}, 1\leqslant i\leqslant l$, and $Q'\omega'\sim_{P}Q\omega S^{\beta}$ for some $\beta\in\textrm{Aut}(\Gamma)$.
\end{defn}

By Theorem \ref{thm:subgroup} and Lemma \ref{lem:equivalent}, $G$-admissible pairs are the same as $G$-admissible solutions. Moreover, two solutions are isomorphic if and only if the corresponding pairs and the corresponding regular coverings are isomorphic. Thus Problem \ref{prob:initial} is further reduced to
\begin{prob} \label{prob:final}
\rm Find all $G$-admissible solutions up to isomorphism.
\end{prob}
\begin{rmk}   \label{rmk:from solution to covering}
\rm Each solution determines, up to equivalence, a unique connected $G$-admissible covering $\Gamma\times_{\phi}(\prod\limits_{\eta=1}^{n}\mathbb{Z}_{p^{k(\eta)}})$ with $n=b-i_{0}$ ($i_{0}$ being the largest $i$ with $r_{i}=0$), $k(\eta)=r_{b+1-\eta}, 1\leqslant\eta\leqslant n$, and
\begin{align}
\phi(x_{i})=(((Q\omega)^{-1})_{i,b},\cdots,((Q\omega)^{-1})_{i,i_{0}+1})\in\mathbb{Z}_{p^{k(1)}}\times\cdots\times\mathbb{Z}_{p^{k(n)}}, \hspace{5mm} 1\leqslant i\leqslant b;
\end{align}
this is because there are isomorphisms
$$\mathbb{Z}^{b}_{p^{k(1)}}/\langle PQ\omega\rangle\cong\mathbb{Z}^{b}_{p^{k(1)}}/\langle P\rangle, \hspace{5mm} \overline{w}\mapsto\overline{w(Q\omega)^{-1}},$$
and
\begin{align*}
\mathbb{Z}^{b}_{p^{k(1)}}/\langle P\rangle&\cong\prod\limits_{\eta=1}^{n}\mathbb{Z}_{p^{k(\eta)}}, \\
\overline{(u_{1},\cdots,u_{b})}&\mapsto(u_{b}\pmod{p^{k(1)}},\cdots,u_{i_{0}+1}\pmod{p^{k(n)}}),
\end{align*}
where by $u\pmod{p^{r}}$ for $u\in\mathbb{Z}_{p^{k}}, r\leqslant k$, we mean the image of $u$ under the canonical homomorphism
\begin{align} \label{eq:canonical homomorphism}
\mathbb{Z}_{p^{k}}\rightarrow\mathbb{Z}_{p^{r}}.
\end{align}
\end{rmk}
\vspace{4mm}

The second part of ($\star$) is, by Lemma \ref{lem:equivalent}, equivalent to
\begin{align}
\deg_{p}(Q\omega S^{\alpha}(Q\omega)^{-1})_{i,j}\geqslant r_{j}-r_{i} \hspace{5mm} \text{for all }j>i \text{ and }\alpha\in G.
\end{align}

It may be frustrating that there are so many variables, especially that $\omega$ may take $b!$ values. Nevertheless, we have some key observations which help simplify the calculations very much.

\begin{prop}\label{prop:key1}
Suppose that $(p;k,l;r_{1},\cdots,r_{l};Q,\omega)$ is a solution.

(a) If $\sigma\in\Sigma_{b}$ satisfies $r_{\sigma(i)}=r_{i}$ for all $i\in\{1,\cdots,b\}$, then $(p;k,l;r_{1},\cdots,r_{l};\\ \sigma Q\sigma^{-1},\sigma\omega)$ is a solution isomorphic to $(p;k,l;r_{1},\cdots,r_{l};Q,\omega)$.

(b) If there exists $\beta\in\rm Aut(\Gamma)$ such that $\beta G\beta^{-1}=G$, and $S^{\beta}=D\tau$ with $\tau\in\Sigma_{b}$ and $D$ diagonal, then $(p;k,l;r_{1},\cdots,r_{l};Q',\omega\tau)$ is a solution isomorphic to $(p;k,l;r_{1},\cdots,r_{l};Q,\omega)$, for some $Q'$.
\end{prop}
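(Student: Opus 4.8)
The plan is to reduce both parts to equalities of the subgroups $\langle P\,\cdot\,\rangle\le\mathbb{Z}_{p^k}^b$, using that isomorphism of solutions coincides with isomorphism of the attached pairs (recorded just before Problem \ref{prob:final}), together with Theorem \ref{thm:subgroup} and the criterion of Lemma \ref{lem:equivalent}. Throughout I write $K=\langle PQ\omega\rangle$ for the pair attached to $(Q,\omega)$ and use $\alpha_*(K)=\langle PQ\omega S^\alpha\rangle$, and more generally $\alpha_*(\langle N\rangle)=\langle NS^\alpha\rangle$. The basic tool for (a) is the observation that, for $\sigma\in\Sigma_b$, the $i$-th row of $P\sigma$ is $p^{r_i}e_{\sigma^{-1}(i)}$, so that $\langle P\sigma\rangle=\{w:\deg_p(w_j)\ge r_{\sigma(j)}\text{ for all }j\}$ (with the convention $r_j=k$ for $j>l$); hence $\langle P\sigma\rangle=\langle P\rangle$ exactly when $r_{\sigma(j)}=r_j$ for all $j$.

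For (a) the hypothesis $r_{\sigma(i)}=r_i$ gives precisely $\langle P\sigma\rangle=\langle P\rangle$. Since right multiplication by an invertible matrix is an automorphism of $\mathbb{Z}_{p^k}^b$, it follows that $\langle P\sigma M\rangle=\langle PM\rangle$ for every invertible $M$. Taking $M=Q\omega$ and $M=Q\omega S^\alpha$, and noting $(\sigma Q\sigma^{-1})(\sigma\omega)=\sigma Q\omega$, I obtain simultaneously that the new pair equals $K$ (so the two solutions are isomorphic via $\beta=\mathrm{id}$, where $S^{\mathrm{id}}=I$) and that the second clause of $(\star)$ for $(\sigma Q\sigma^{-1},\sigma\omega)$ follows from that for $(Q,\omega)$. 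It then remains to check the first clause, that $\sigma Q\sigma^{-1}$ is upper-triangular with diagonal $1$ and obeys the size bounds. Here $(\sigma Q\sigma^{-1})_{i,j}=Q_{\sigma^{-1}(i),\sigma^{-1}(j)}$, so the diagonal is $1$; the delicate point is upper-triangularity, since conjugation by a permutation need not preserve it. This is rescued by the bound itself: $0\le Q_{s,t}<p^{r_t-r_s}$ forces $Q_{s,t}=0$ whenever $r_s=r_t$, so a nonzero off-diagonal entry forces $r_i=r_{\sigma^{-1}(i)}<r_{\sigma^{-1}(j)}=r_j$, and monotonicity of $(r_i)$ then yields $i<j$; the same identities $r_{\sigma^{-1}(i)}=r_i$ make the transformed size bound correct.

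For (b) I first note that $\beta_*$ preserves $G$-admissibility: for $\alpha\in G$, $\alpha_*(\beta_*(K))=(\alpha\beta)_*(K)=\beta_*\bigl((\beta^{-1}\alpha\beta)_*(K)\bigr)=\beta_*(K)$, since $\beta^{-1}\alpha\beta\in G$ (as $\beta G\beta^{-1}=G$) and $K$ is $G$-admissible. Next I compute the isomorphic pair $\beta_*(K)=\langle PQ\omega S^\beta\rangle=\langle PQ\omega D\tau\rangle$, and push $D$ through $\omega$ by writing $\omega D=D'\omega$ with $D'=\omega D\omega^{-1}$ again diagonal and invertible, giving $\beta_*(K)=\langle P(QD')\,\omega\tau\rangle$. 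The matrix $U:=QD'$ is upper-triangular with unit diagonal, so the elementary row reduction used in the proof of Theorem \ref{thm:subgroup} to pass from $P{S'}^{-1}$ to $PQ$ applies verbatim (scale each row by its diagonal unit, then clear off-diagonal entries by division with remainder, legitimate since $r_i\le r_j$ for $i<j$); this produces a valid $Q'$ with $\langle PU\rangle=\langle PQ'\rangle$, whence $\beta_*(K)=\langle PQ'\omega\tau\rangle$. Being $G$-admissible and in normal form, this exhibits $(p;k,l;r_1,\dots,r_l;Q',\omega\tau)$ as a solution, and $Q'(\omega\tau)\sim_P Q\omega S^\beta$ holds by construction, so it is isomorphic to $(Q,\omega)$ via $\beta$.

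The bookkeeping with permutation matrices and the transfer of the $P$-equivalence clause are routine. I expect the real content to lie in two small facts: in (a), that the size constraint $Q_{i,j}<p^{r_j-r_i}$ together with monotonicity of $(r_i)$ is exactly what keeps $\sigma Q\sigma^{-1}$ upper-triangular; and in (b), that $QD'$ shares the upper-triangular, unit-diagonal shape of the matrices normalized in Theorem \ref{thm:subgroup}, so that its normalization reintroduces a genuine $Q'$ meeting the bounds rather than spawning an extra permutation factor that would spoil the desired $\omega\tau$.
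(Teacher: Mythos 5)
Your proof is correct and follows essentially the same route as the paper's: part (a) rests on $\sigma\sim_P\mathrm{id}$ (equivalently $\langle P\sigma\rangle=\langle P\rangle$) coming from $r_{\sigma(i)}=r_i$, and part (b) on writing $\omega D=D'\omega$, renormalizing $QD'$ to $Q'$ by the row reduction from the proof of Theorem \ref{thm:subgroup}, and using $\beta G\beta^{-1}=G$ to transfer the second clause of $(\star)$. The only substantive difference is that you spell out the verification (via the size bounds forcing $Q_{s,t}=0$ when $r_s=r_t$) that $\sigma Q\sigma^{-1}$ stays upper-triangular, a point the paper dismisses as ``easy to see.''
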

\begin{proof}
(a) It is easy to see that $(\sigma Q\sigma^{-1},\sigma\omega)$ satisfies ($\star$). Since $r_{\sigma(i)}=r_{i}$, we have $\sigma\sim_{P}\textrm{id}$, hence $(\sigma Q\sigma^{-1})(\sigma\omega)=\sigma(Q\omega)\sim_{P}Q\omega$.

(b) Suppose $D_{i,j}=\delta_{i,j}\cdot d_{j}$. Let $D'$ be the diagonal matrix with $D'_{i,j}=\delta_{i,j}\cdot d_{\omega^{-1}(j)}$. Similarly to the proof of Theorem \ref{thm:subgroup}, we may take row transformations to convert $PQD'$ into $PQ'$ with $Q'\in\textrm{GL}(b,\mathbb{Z}_{p^{k}})$ satisfying the first part of ($\star$), then $Q'\sim_{P}QD'$. Hence $Q\omega S^{\beta}=Q\omega D\tau=QD'\omega\tau\sim_{P}Q'\omega\tau$. This also shows that $(p;k,j_{0};s_{1},\cdots,s_{j_{0}};Q',\omega\tau)$ is a solution: for all $\alpha\in G$, we have $Q\omega S^{\beta}S^{\alpha}=Q\omega S^{\beta\alpha\beta^{-1}}S^{\beta}\sim_{P}Q\omega S^{\beta}$, hence $Q'\omega\tau S^{\alpha}\sim_{P}Q'\omega\tau$.
\end{proof}

Let
\begin{align}
\textrm{Aut}_{G}(\Gamma;T)=\{\beta\in\textrm{Aut}(\Gamma)\colon\beta(T)=T,\beta G\beta^{-1}=G\}.
\end{align}

Note that for each $\beta\in\textrm{Aut}_{G}(\Gamma;T)$,
\begin{align}
S^{\beta}=D^{\beta}\tau^{\beta}, \label{eq:tau}
\end{align}
where $D^{\beta}$ is some diagonal matrix and $\tau^{\beta}\in\Sigma_{b}$ is the induced permutation on cotree edges.

Suppose $r_{i_{\eta-1}+1}=\cdots =r_{i_{\eta}}<r_{i_{\eta}+1}$, $0\leqslant\eta\leqslant s$, where $i_{s}=l$, and by convention we have set $i_{-1}=0, i_{s+1}=b$.
Define an equivalence relation $\sim$ on $\Sigma_{b}$ by declaring $\omega\sim\omega'$ if there exists
$\beta\in\textrm{Aut}_{G}(\Gamma;T)$ such that
\begin{align} \label{eq:equivalence}
\{(\omega\tau^{\beta})^{-1}(i)\colon i_{\eta-1}<i\leqslant i_{\eta}\}=\{\omega'^{-1}(i)\colon i_{\eta-1}<i\leqslant i_{\eta}\}, \hspace{5mm} \eta=0,\cdots,s+1.
\end{align}
Let $\Sigma_{b}(i_{0},\cdots,i_{s})$ be the set of equivalence classes.

By Proposition \ref{prop:key1}, to find isomorphism classes of $G$-admissible solutions with $r_{i_{\eta-1}+1}=\cdots =r_{i_{\eta}}<r_{i_{\eta}+1}$, it is sufficient to take a representative from each equivalence class in $\Sigma_{b}(i_{0},\cdots,i_{s})$.

\vspace{4mm}

For $r\leqslant k$ and $X\in\mathbb{Z}_{p^{k}}^{n,m}$, let $X\pmod{p^{r}}\in\mathbb{Z}_{p^{r}}^{n,m}$ denote the matrix obtained by replacing each entry by its image under (\ref{eq:canonical homomorphism}).

\begin{prop}\label{prop:key2}
Suppose that $(p;k,l;r_{1},\cdots,r_{l};Q,\omega)$ is a solution, $r_{i}<r_{i+1}$ for some $i$ with $i_{0}\leqslant i\leqslant l$. Then for any $r$ with $1\leqslant r\leqslant r_{i+1}-r_{i}$, there exists $\overline{Q}\in\rm GL(\it b,\mathbb{Z}_{p^{r}})$ such that $(p;r,i;0,\cdots,0;\overline{Q},\omega)$ is a solution.

Explicitly, write $Q$ in block form as
$Q=\left(
                             \begin{array}{cc}
                               Q_{1} & Q_{2} \\
                               \mathbf{0} & Q_{3}  \\
                             \end{array}
                           \right)
$,
where $\mathbf{0}\in\mathbb{Z}_{p^{k}}^{b-i,i}$ is the zero matrix, $Q_{1}\in$\rm GL$(i,\mathbb{Z}_{p^{k}})$, $Q_{2}\in\mathbb{Z}_{p^{k}}^{i,b-i}$, $Q_{3}\in$\rm GL$(b-i,\mathbb{Z}_{p^{k}})$, then
\begin{align}
\overline{Q}=\left(
\begin{array}{cc}
I & Q_{1}^{-1}Q_{2} \\
\mathbf{0} & I  \\
\end{array}
\right)\pmod{p^{r}}\hspace{5mm}\in\mathbb{Z}_{p^{r}}^{b,b}.
\end{align}
\end{prop}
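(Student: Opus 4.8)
The plan is to verify directly that the explicitly given $\overline{Q}$ satisfies both parts of condition $(\star)$ for the data $(p;r,i;0,\ldots,0;\overline{Q},\omega)$. For the new solution all the $p$-exponents are $0$ (and equal $r$ for indices $>i$ by convention), so the governing matrix is $\overline{P}=(I_{i}\mid\mathbf{0})\in\mathbb{Z}_{p^{r}}^{i,b}$. The first part of $(\star)$ demands that $\overline{Q}$ be upper-triangular with diagonal $1$'s whose only possibly-nonzero off-diagonal entries lie in the top-right $i\times(b-i)$ block; this is immediate from the stated block shape $\overline{Q}=\left(\begin{smallmatrix} I & Q_{1}^{-1}Q_{2}\\ \mathbf{0} & I\end{smallmatrix}\right)\pmod{p^{r}}$, which moreover has determinant $1$, so $\overline{Q}\in\textrm{GL}(b,\mathbb{Z}_{p^{r}})$.

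The substance is the second part of $(\star)$. By Lemma \ref{lem:equivalent} the relation $\overline{Q}\omega\,\overline{S^{\alpha}}\sim_{\overline{P}}\overline{Q}\omega$ is equivalent to $\deg_{p}\bigl(\overline{Q}\omega\,\overline{S^{\alpha}}(\overline{Q}\omega)^{-1}\bigr)_{s,t}\geq r'_{t}-r'_{s}$ for all $t>s$, where $r'_{s}=0$ for $s\leq i$ and $r'_{t}=r$ for $t>i$. This difference vanishes unless $s\leq i<t$, in which case it equals $r$; hence the entire condition collapses to the single requirement that, for every $\alpha\in G$, the top-right $i\times(b-i)$ block of $\overline{Q}\omega\,\overline{S^{\alpha}}(\overline{Q}\omega)^{-1}$ vanish in $\mathbb{Z}_{p^{r}}$.

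To produce this from the original solution I would factor $Q=D\widehat{Q}$, where $D=\mathrm{diag}(Q_{1},Q_{3})$ is block-diagonal and $\widehat{Q}=\left(\begin{smallmatrix} I & Q_{1}^{-1}Q_{2}\\ \mathbf{0} & I\end{smallmatrix}\right)$, so that $\overline{Q}=\widehat{Q}\pmod{p^{r}}$. Writing $M=\widehat{Q}(\omega S^{\alpha}\omega^{-1})\widehat{Q}^{-1}$, one has $Q\omega S^{\alpha}(Q\omega)^{-1}=DMD^{-1}$, and conjugation by the block-diagonal $D$ sends the top-right block $M_{12}$ to $Q_{1}M_{12}Q_{3}^{-1}$. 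Since $s\leq i$ and $t\geq i+1$ force $r_{s}\leq r_{i}$ and $r_{t}\geq r_{i+1}$, the inequality supplied by the original solution gives $\deg_{p}(Q_{1}M_{12}Q_{3}^{-1})_{s,t}\geq r_{t}-r_{s}\geq r_{i+1}-r_{i}\geq r$ throughout that block; thus every entry of $Q_{1}M_{12}Q_{3}^{-1}$ is divisible by $p^{r}$.

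Finally, because $Q_{1},Q_{3}$ stay invertible after reduction, multiplying $Q_{1}M_{12}Q_{3}^{-1}\equiv 0\pmod{p^{r}}$ on the left by $Q_{1}^{-1}$ and on the right by $Q_{3}$ yields $M_{12}\equiv 0\pmod{p^{r}}$; since reduction is a ring homomorphism and $\overline{Q}=\widehat{Q}\pmod{p^{r}}$, the reduced block $\overline{M_{12}}$ is exactly the top-right block of $\overline{Q}\omega\,\overline{S^{\alpha}}(\overline{Q}\omega)^{-1}$, which therefore vanishes. The one point that needs care is the convention-bookkeeping ($r_{t}=k$ for $t>l$ versus $r'_{t}=r$ for $t>i$) together with the reduction of $S^{\alpha}$ through $\mathbb{Z}_{p^{k}}\to\mathbb{Z}_{p^{r}}$; the uniform chain $r_{t}-r_{s}\geq r_{i+1}-r_{i}\geq r$ handles the exponents, and the factorization $Q=D\widehat{Q}$ — isolating the invertible block-diagonal conjugation, which cannot disturb divisibility of an off-diagonal block, from the unipotent part — is the key device that makes the argument go through.
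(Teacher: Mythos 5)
Your proof is correct and follows essentially the same route as the paper: both reduce the second half of $(\star)$ via Lemma \ref{lem:equivalent} to the vanishing mod $p^{r}$ of the top-right $i\times(b-i)$ block, and both exploit the factorization $Q=\mathrm{diag}(Q_{1},Q_{3})\cdot\overline{Q}$ together with the fact that conjugation by an invertible block-diagonal matrix cannot affect divisibility of that block. The paper states this more tersely (writing the reduced conjugate directly in block form), but the underlying argument is identical, and your extra bookkeeping on the exponent conventions and the explicit check of the first half of $(\star)$ are accurate.
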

\begin{proof}
The first half of ($\star$) is clearly satisfied. For the second half, writing
$Q\omega S^{\alpha}\omega^{-1}Q^{-1}\pmod{p^{r}}=
\left(
     \begin{array}{cc}
     \ast  & \mathbf{0} \\
     \ast & V^{\alpha} \\
     \end{array}
\right)$
with $V^{\alpha}\in\textrm{GL}(b-i,\mathbb{Z}_{p^{r}})$, and regarding $\omega, S^{\alpha}$ as in $\textrm{GL}(b,\mathbb{Z}_{p^{r}})$,
we have
$\overline{Q}\omega S^{\alpha}\omega^{-1}\overline{Q}^{-1}=
\left(
    \begin{array}{cc}
     \ast & \mathbf{0} \\
     \ast & Q_{3}^{-1}V^{\alpha}Q_{3} \\
    \end{array}
\right)$.
Thus $(\star)$ holds.
\end{proof}

\begin{rmk}
\rm The special case $r=1$ implies that, if $r_{i}<r_{i+1}$, then $(p;k,l;r_{1},\cdots,r_{l};Q,\omega)$ cannot be a $G$-admissible solution unless there exists a $G$-admissible $\mathbb{Z}_{p}^{i}$-covering of $\Gamma$. This is practically useful as long as the classification of $G$-admissible elementary abelian coverings is known.
\end{rmk}

Summarizing above discussions, now we go to solve Problem \ref{prob:final} by
\begin{alg} \label{alg}
\rm (a) Set $s=0, k=1$ and run Step (c) and (d) below to find elementary abelian regular coverings. That is to say, for each $i_{0}\in\{0,1,\cdots,b-1\}$ and each class in $\Sigma_{b}(i_{0})$, choose a representative $\omega$, then find all upper-triangular matrices $Q\in\textrm{GL}(b,\mathbb{Z}_{p})$ satisfying $Q_{i,i}=1, 1\leqslant i\leqslant b$, and $(Q\omega S^{\alpha}\omega^{-1}Q^{-1})_{i,j}=0$ for all $i\leqslant i_{0}<j$ and $\alpha\in G$.

Let $\mathcal{P}_{0}$ be the set of pairs $(p,i_{0})$ such that $(p;1,i_{0};0,\cdots,0;Q,\omega)$ emerges as a solution, i.e., there is a $G$-admissible $\mathbb{Z}_{p}^{b-i_{0}}$-covering of $\Gamma$.

(b) Determine the set
\begin{align*}
\mathcal{P}:=\{(p;i_{0},i_{1},\cdots,i_{s})\colon 0\leqslant i_{0}<i_{1}<\cdots< i_{s}<b,(p,i_{\eta})\in\mathcal{P}_{0},0\leqslant\eta\leqslant s\}.
\end{align*}

(c) For each $(p;i_{0},\cdots,i_{s})\in\mathcal{P}$, determine $\Sigma_{b}(i_{0},\cdots,i_{s})$.

For each equivalence class in $\Sigma_{b}(i_{0},\cdots,i_{s})$, choose a representative $\omega$, and find all $(c_{1},\cdots,c_{s},k)\in\mathbb{Z}^{s+1}$ and upper-triangular matrices $Q\in \textrm{GL}(b,\mathbb{Z}_{p^{k}})$ satisfying

(i) $1\leqslant c_{1}<\cdots <c_{s}<k$;

(ii) $Q_{i,i}=1, 1\leqslant i\leqslant b$; $0\leqslant Q_{i,j}<p^{c_{\eta}-c_{\eta'}}$ for all $i,j$ with $i_{\eta'}<i\leqslant i_{\eta'+1}\leqslant i_{\eta}<j\leqslant i_{\eta+1}$;

(iii) $(Q\omega S^{\alpha}\omega^{-1}Q^{-1})_{i,j}\equiv 0\pmod{p^{c_{\eta}-c_{\eta'}}}$, for all $i,j$ with $i_{\eta'}<i\leqslant i_{\eta'+1}\leqslant i_{\eta}<j\leqslant i_{\eta+1}$ and
all $\alpha\in G$.

Then $(p;k,l;r_{1},\cdots,r_{l};Q,\omega)$ is a solution, where $l=i_{s}$, $r_{i}=0$ for $i\leqslant i_{0}$, and $r_{i}=c_{\eta}$ for $i_{\eta-1}<i\leqslant i_{\eta}$.

(d) Distinguish the isomorphism classes among all the solutions.
\end{alg}
\begin{rmk}
\rm The goal of Step (a) can be achieved alternatively by applying the method of \cite{Elementary, Vertex}.
\end{rmk}
\begin{rmk} \label{rmk:congruence equation}
\rm In Step (c), when checking condition (c)(iii), it is sufficient to do it for a set of generators of $G$.

When $s=0$, (in which case we call the solution ``\emph{pure}"), by (c)(ii), $Q$ can be written in block form as
$\left(
    \begin{array}{cc}
        I & \Theta \\
        0 & I \\
    \end{array}
 \right)
$
for $\Theta\in\mathbb{Z}_{p^{k}}^{i_{0},b-i_{0}}$.
Writing $\omega S^{\alpha}\omega^{-1}=
\left(
    \begin{array}{cc}
        B_{1}^{\alpha} & B_{2}^{\alpha} \\
        B_{3}^{\alpha} & B_{4}^{\alpha} \\
    \end{array}
\right)
$,
condition (c)(iii) is equivalent to the equations over $\mathbb{Z}_{p^{k}}$:
\begin{align}
(B_{1}^{\alpha}+\Theta B_{3}^{\alpha})\Theta=B_{2}^{\alpha}+\Theta B_{4}^{\alpha}. \label{eq:congruence equation}
\end{align}

One obtains a system of quadratic congruence equations from (\ref{eq:congruence equation}) for various $\alpha$. It is convenient to first get pure solutions and then apply Proposition \ref{prop:key2} to obtain some constraints on $p,k,l,r_{1},\cdots,r_{l},Q,\omega$.
\end{rmk}
\begin{rmk} \label{rmk:reduction}
\rm If $(p;k,l;r_{1},\cdots,r_{l};Q,\omega)$ is a solution as in (c) such that $i_{0}=0$ (equivalently, $r_{1}>0$), then $(p;k',l;r'_{1},\cdots,r'_{l};Q,\omega)$ with $k'=k-c_{1}, r'_{i}=r_{i}-c_{1}$ is also a solution. Conversely, from any positive integer $c$ and any solution $(p;k,l;r_{1},\cdots,r_{l};Q,\omega)$, one can construct another solution $(p;k'',l;r''_{1},\cdots,r''_{l};Q,\omega)$ with $k''=k+c$, $r''_{i}=r_{i}+c$, $r''_{1}>0$.

Therefore we can consider the cases with $i_{0}>0$ at first and then generate those with $i_{0}=0$.
\end{rmk}

\section{Arc-transitive finite abelian coverings of the Petersen graph}

\subsection{The Petersen graph}

Let $X=\{a,b,c,d,e\}$, and $V=\{0,1,2,3,4,5,6,7,8,9\}$, where $0=\{a,b\}$, $1=\{c,d\}$, $2=\{c,e\}$, $3=\{d,e\}$, $4=\{a,e\}$, $5=\{b,e\}$, $6=\{a,d\}$, $7=\{b,d\}$, $8=\{a,c\}$, $9=\{b,c\}$. The Petersen graph has $V(\Gamma)=V$ and $E(\Gamma)=\{\{i,j\}|i,j\in V,i\cap j=\emptyset\}$. It is known that $\textrm{Aut}(\Gamma)\cong\Sigma_{5}$, the action being induced from that on $X$.

Fix a spanning tree $T$ as in Figure 1(a); the induced subgraph is shown in (b). For each cotree edge $\{i,j\}$, choose the arc $(i,j)$ with $i<j$. Label $x_{1}=(5,8),x_{2}=(7,8),x_{3}=(4,7),x_{4}=(4,9),x_{5}=(6,9),x_{6}=(5,6)$.

\begin{figure}[h]
  \centering
  \includegraphics[width=0.6\textwidth]{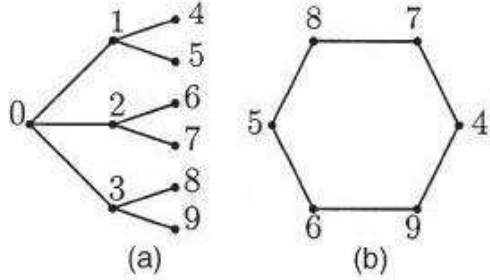}\\
  \caption{(a) The spanning tree T; (b) the induced subgraph}\label{1}
\end{figure}

Let $\alpha_{1}=(13)(67)(49)(58)$, $\alpha_{2}=(19)(56)(28)(03)$, $\alpha_{3}=(123)(468)(579)$, $\alpha_{4}=(45)(67)(89)$, induced by $(ab)(ce), (ae)(bd), (ced), (ab)$ respectively.

According to \cite{Crite}, $\langle\alpha_{1},\alpha_{2},\alpha_{3},\alpha_{4}\rangle=\textrm{Aut}(\Gamma)$, and the subgroup $H:=\langle\alpha_{1},\alpha_{2},\alpha_{3}\rangle$ is isomorphic to the alternating group on 5 elements, hence is normal. A regular covering $\tilde{\Gamma}$ of $\Gamma$ is arc-transitive if and only if $H$ can be lifted. Moreover, $\tilde{\Gamma}$ is 2-arc-transitive if $H$ can be lifted, and $\tilde{\Gamma}$ is 3-arc-transitive if the whole $\textrm{Aut}(\Gamma)$ can be lifted.

The matrices determined by (\ref{induced matrix}) can be calculated:\\
$S^{\alpha_{1}}=\left(\begin{array}{cccccc}
-1 & 0 & 0 & 0 & 0 & 0\\ 0 & 0 & 0 & 0 & 0 & -1 \\ 0 & 0 & 0 & 0 & -1 & 0 \\ 0 & 0 & 0 & -1 & 0 & 0 \\ 0 & 0 & -1 & 0 & 0 & 0 \\ 0 & -1 & 0 & 0 & 0 & 0 \end{array}\right)$,
$S^{\alpha_{2}}=\left(\begin{array}{cccccc}
0 & 0 & 0 & 0 & -1 & 0\\ 0 & -1 & 0 & 0 & 0 & 0 \\ 0 & 1 & 1 & -1 & 0 & 0\\ 0 & 0 & 0 & -1 & 0 & 0\\ -1 & 0 & 0 & 0 & 0 & 0 \\ 1 & 0 & 0 & 0 & -1 & -1
\end{array}\right)$,\\
$S^{\alpha_{3}}=\left(\begin{array}{cccccc}
0 & 0 & -1 & 0 & 0 & 0\\ 0 & 0 & 0 & -1 & 0 & 0 \\ 0 & 0 & 0 & 0 & 1 & 0 \\ 0 & 0 & 0 & 0 & 0 & -1 \\ -1 & 0 & 0 & 0 & 0 & 0 \\ 0 & 1 & 0 & 0 & 0 & 0 \end{array}\right)$,
$S^{\alpha_{4}}=\left(\begin{array}{cccccc}
0 & 0 & 0 & 1 & 0 & 0\\ 0 & 0 & 0 & 0 & 1 & 0 \\ 0 & 0 & 0 & 0 & 0 & 1 \\ 1 & 0 & 0 & 0 & 0 & 0 \\ 0 & 1 & 0 & 0 & 0 & 0 \\ 0 & 0 & 1 & 0 & 0 & 0 \end{array}\right)$.

\subsection{Arc-transitive abelian coverings}

Recall Theorem 3.2 of \cite{Crite}:
\begin{thm} \label{thm:recall}
Each connected arc-transitive elementary abelian covering of $\Gamma$ is isomorphic to a unique one given in Table 1. Moreover, $X(2,1), X(2,2),\\ X(5,3), X(p,6)$ are 3-arc-transitive, and $X'(2,1), X^{\pm}(p,3)$ are 2-transitive but not 3-arc-transitive.
\end{thm}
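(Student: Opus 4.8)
The plan is to specialize the machinery of Section~3 to $k=1$ (that is, to run Step~(a) of Algorithm~\ref{alg}), where by Proposition~\ref{prop:condition-for-lift} and Remark~\ref{rmk:pair} a connected arc-transitive elementary abelian covering corresponds exactly to a proper $H$-invariant subspace $K\subsetneq\mathbb{F}_{p}^{6}$. Here $b=6$, and $H=\langle\alpha_{1},\alpha_{2},\alpha_{3}\rangle\cong A_{5}$ is the subgroup whose lifting is equivalent to arc-transitivity of the covering. Concretely, $K$ must be stable under each $\alpha_{\ast}$, i.e. under the reductions mod $p$ of the explicit matrices $S^{\alpha_{1}},S^{\alpha_{2}},S^{\alpha_{3}}$; thus $V:=\mathbb{F}_{p}^{6}$ becomes an $\mathbb{F}_{p}[A_{5}]$-module via these three matrices, and the first task is to enumerate all proper submodules $K$. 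Given such a $K$ of dimension $i_{0}$, the covering group is $\mathbb{Z}_{p}^{6-i_{0}}$ and the voltage assignment is read off from the formula of Remark~\ref{rmk:from solution to covering}.

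First I would split on the prime $p$. For $p\notin\{2,3,5\}$ the order $|A_{5}|=60$ is invertible, so by Maschke's theorem $V$ is semisimple; I would decompose $V$ into irreducible $\mathbb{F}_{p}[A_{5}]$-modules (the decomposition type being determined from the Brauer characters, which for these $p$ coincide with the ordinary characters of $A_{5}$ of degrees $1,3,3',4,5$), and then list the submodules as sums of isotypic pieces, taking care wherever an irreducible occurs with multiplicity or becomes reducible over $\mathbb{F}_{p}$. For the modular primes $p\in\{2,3,5\}$ the module $V$ need not be semisimple, so there I would instead compute the full submodule lattice directly from the explicit $S^{\alpha_{i}}\pmod p$, via the socle/radical series.

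Having produced all invariant $K$, I would then identify isomorphic coverings. By Proposition~\ref{prop:lifting iso} two subspaces give isomorphic coverings precisely when related by some $\beta_{\ast}$, $\beta\in\mathrm{Aut}(\Gamma)=\Sigma_{5}$; since $H\trianglelefteq\Sigma_{5}$ acts trivially on the set of $H$-invariant subspaces, this action factors through $\Sigma_{5}/A_{5}\cong\mathbb{Z}_{2}$, represented by the involution $S^{\alpha_{4}}$. Factoring out this involution yields unique representatives, which I would match to the named families of Table~1. For the ``Moreover'' clause: an arc-transitive covering is automatically $2$-arc-transitive, and it is $3$-arc-transitive once the full $\mathrm{Aut}(\Gamma)$ lifts, i.e. once $K$ is in addition stable under $S^{\alpha_{4}}$; hence I would simply test each representative $K$ for $S^{\alpha_{4}}$-invariance, declaring the $S^{\alpha_{4}}$-stable ones $3$-arc-transitive and the rest $2$- but not $3$-arc-transitive.

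The main obstacle I expect is the modular case, especially $p=2$ and $p=5$ (the primes dividing $60$ for which $A_{5}\cong\mathrm{SL}(2,4)$, respectively $\mathrm{PSL}(2,5)$, forces genuinely non-semisimple behaviour on the $6$-dimensional module, and where the extra families $X(2,1),X(2,2),X'(2,1),X(5,3)$ live). There the submodule lattice is not merely the Boolean lattice of isotypic components: uniserial chains and non-split extensions appear, and these must be read off carefully from the explicit matrices so that no covering is missed and none is double-counted. Combined with the final bookkeeping of $\Sigma_{5}/A_{5}$-orbits and the assignment of each orbit to its name in Table~1, this is the most delicate part of the argument.
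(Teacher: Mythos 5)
The paper does not actually prove this statement: Theorem~4.1 is quoted verbatim from \cite{Crite} (Theorem~3.2 there), so there is no internal proof to compare against. Your outline is essentially the method of that reference, and it coincides with Step~(a) of the paper's own Algorithm~\ref{alg} specialized to $k=1$: arc-transitive connected $\mathbb{Z}_p^{n}$-coverings correspond to proper $H$-invariant subspaces $K\leqslant H_1(\Gamma;\mathbb{F}_p)\cong\mathbb{F}_p^6$ via Propositions~\ref{prop:lifting iso} and~\ref{prop:condition-for-lift}, one enumerates these as submodules of the $\mathbb{F}_p[A_5]$-module determined by $S^{\alpha_1},S^{\alpha_2},S^{\alpha_3}\pmod p$ (Maschke/Brauer characters for $p\nmid 60$, explicit submodule lattices for $p\in\{2,3,5\}$), and then one identifies orbits under $\mathrm{Aut}(\Gamma)/H\cong\mathbb{Z}_2$. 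That skeleton is correct and is what the cited source does.

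Two points in your plan are genuinely incomplete. First, the negative half of the ``Moreover'' clause does not follow from testing $S^{\alpha_4}$-invariance. If $K$ is $S^{\alpha_4}$-stable then the full $\Sigma_5$ lifts and $3$-arc-transitivity follows; but if $K$ is not $S^{\alpha_4}$-stable you have only shown that the \emph{fibre-preserving} automorphisms of the cover are not transitive on $3$-arcs. To conclude ``not $3$-arc-transitive'' you must rule out automorphisms of the covering graph that do not project to $\Gamma$ (e.g.\ by showing the covering transformation group is normal in the full automorphism group of the cover, or by a counting/local argument); this is a known subtlety in exactly these classifications and cannot be waved away. Second, your step ``factor out the involution $S^{\alpha_4}$ to get unique representatives'' collides with the table: the two $A_5$-invariant $3$-dimensional subspaces are the two $3$-dimensional irreducibles of $A_5$, which are interchanged by the outer automorphism represented by $\alpha_4$, so your procedure would merge $X^{+}(p,3)$ and $X^{-}(p,3)$ into a single class, whereas Table~1 lists them separately (see Remark~4.2, where the paper deliberately adopts the finer convention under which they are distinguished). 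You need to state explicitly which notion of isomorphism of coverings you are quotienting by, and make it match the uniqueness claim of the theorem; as written, your output would not reproduce Table~1.
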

\begin{rmk}
\rm As pointed out in \cite{Crite}, $X^{+}(p,3)$ is not isomorphic to $X^{-}(p,3)$ as coverings, although they are isomorphic as graphs. Therefore, it is better to distinguish them.
\end{rmk}

We give new families of coverings in Table 2.
\begin{rmk}
\rm In Table 1, $\lambda_{\pm}$ are the two solutions of the equation $\lambda^{2}-\lambda-1=0$ in $\mathbb{Z}_{p}$. In Table 2, the $\lambda_{\pm}$ in 4th row are the two solutions of the equation $\lambda^{2}-\lambda-1=0$ in $\mathbb{Z}_{p^{k}}$, while the ones in 6th row are the two solutions of the equation $\lambda^{2}-\lambda-1=0$ in $\mathbb{Z}_{p^{s}}$ and are identified with elements of $\mathbb{Z}_{p^{k}}$ via the map $\mathbb{Z}_{p^{s}}\xrightarrow[]{f_{p^{s}}}\{0,1,\cdots,p^{s}-1\}\subset\{0,1,\cdots,p^{k}-1\}\xrightarrow[]{f^{-1}_{p^{k}}}\mathbb{Z}_{p^{k}}$, (for $f_{n}$ see the end of Section 1, and it should be warned that $(f_{p^{k}}^{-1}f_{p^{s}})(\lambda_{\pm})$ need not be solutions to the equation $\lambda^{2}-\lambda-1=0$ over $\mathbb{Z}_{p^{k}}$).
\end{rmk}

Here is the main result of this section:
\begin{thm} \label{thm:abelian coverings of Petersen}
Each connected primary arc-transitive finite abelian covering of $\Gamma$ is isomorphic to a unique one in Table 1 and Table 2. Moreover, in Table 2, $X_{k,1}(2,6)$, $X_{k,2}(2,6)$, $X_{k,3}(5,6)$, $X_{k,6}(p,6)$ are 3-arc-transitive and the others are 2-arc-transitive but not 3-arc-transitive.

Each connected arc-transitive finite abelian covering of $\Gamma$ is isomorphic to the fibered product of connected $p$-primary ones, with distinct primes $p$.
\end{thm}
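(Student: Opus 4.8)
The plan is to reduce the classification of arc-transitive finite abelian coverings to the primary case already handled by the main algorithm, and then assemble the general abelian coverings as fibered products. First I would invoke Lemma \ref{lem:reduction to primary}: for an arbitrary finite abelian group $A = \prod_{p \in I} A^{(p)}$, the covering $\Gamma \times_{\phi} A$ is the fibered product of its $p$-primary parts, and it admits a lift of a given $\alpha$ (in particular of each generator of $H$) if and only if every primary part does. Since arc-transitivity is equivalent to liftability of $H = \langle \alpha_1, \alpha_2, \alpha_3 \rangle$ (as recalled in the Petersen setup), this immediately shows that $\Gamma \times_{\phi} A$ is arc-transitive if and only if each primary part $\Gamma \times_{\phi^{(p)}} A^{(p)}$ is arc-transitive. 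This gives one direction of the final sentence: any connected arc-transitive finite abelian covering decomposes as a fibered product of connected $p$-primary arc-transitive ones over distinct primes.

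Next I would establish that the primary arc-transitive coverings are exactly those enumerated in Table 1 and Table 2, which is the content of the first two sentences of the theorem. Here the strategy is to run Algorithm \ref{alg} with $G = H$ and base graph the Petersen graph, using the explicit matrices $S^{\alpha_1}, S^{\alpha_2}, S^{\alpha_3}$ and the first Betti number $b = 6$. Step (a) recovers the elementary abelian case, which by Theorem \ref{thm:recall} is precisely Table 1; this fixes the set $\mathcal{P}_0$ of admissible pairs $(p, i_0)$. Using Remark \ref{rmk:congruence equation}, I would first determine the pure solutions ($s = 0$) by solving the quadratic congruence system $(B_1^{\alpha} + \Theta B_3^{\alpha})\Theta = B_2^{\alpha} + \Theta B_4^{\alpha}$ over $\mathbb{Z}_{p^k}$ for each generator $\alpha$, for each representative $\omega$ of the equivalence classes in $\Sigma_b(i_0, \dots, i_s)$ computed via the $\sim$ relation of (\ref{eq:equivalence}). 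The appearance of $\lambda_{\pm}$ solving $\lambda^2 - \lambda - 1 = 0$ in the tables signals that these congruence systems reduce to that quadratic, matching the golden-ratio-type eigenvalue structure of the Petersen graph's homology action.

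Then I would use Proposition \ref{prop:key2} and Remark \ref{rmk:reduction} to pass between pure solutions and the general mixed solutions ($s > 0$), obtaining the constraints on $k, l, r_1, \dots, r_l$ and assembling the full list in Table 2, with the non-pure families arising by layering admissible elementary data across the filtration $i_0 < i_1 < \dots < i_s$. Finally, to decide which coverings are $3$-arc-transitive rather than merely $2$-arc-transitive, I would test liftability of the full $\textrm{Aut}(\Gamma) = \langle \alpha_1, \alpha_2, \alpha_3, \alpha_4 \rangle$ by additionally imposing the condition $Q\omega S^{\alpha_4} \sim_P Q\omega$; the families satisfying it are exactly those flagged as $3$-arc-transitive. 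Uniqueness up to isomorphism follows from Step (d), distinguishing isomorphism classes of solutions via the relation $Q'\omega' \sim_P Q\omega S^{\beta}$ for $\beta \in \textrm{Aut}(\Gamma)$, using Proposition \ref{prop:key1} to cut down the number of representatives.

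I expect the main obstacle to be the bookkeeping in Step (c)–(d): solving the quadratic congruences over $\mathbb{Z}_{p^k}$ for all prime powers simultaneously, and especially carefully distinguishing isomorphism classes so that genuinely distinct coverings (such as $X^+(p,3)$ versus $X^-(p,3)$, which are isomorphic as graphs but not as coverings) are not conflated, while isomorphic solutions are identified. The subtlety noted in the remarks — that $(f_{p^k}^{-1} f_{p^s})(\lambda_{\pm})$ need not solve $\lambda^2 - \lambda - 1 = 0$ over $\mathbb{Z}_{p^k}$ — indicates that the lifting of eigenvalue data across different moduli must be tracked with care, and this is where the computation is most error-prone rather than conceptually hard.
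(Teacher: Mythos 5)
Your proposal follows essentially the same route as the paper's proof in Section 4.3: reduction to primary parts via Lemma \ref{lem:reduction to primary}, running Algorithm \ref{alg} with $G=H$ starting from the elementary abelian classification of Theorem \ref{thm:recall} to get $\mathcal{P}_{0}$, solving the quadratic congruences of Remark \ref{rmk:congruence equation} for each representative in $\Sigma_{6}(i_{0},\dots,i_{s})$, using Proposition \ref{prop:key2} and Remark \ref{rmk:reduction} to handle the mixed and $i_{0}=0$ cases, testing liftability of $\alpha_{4}$ for $3$-arc-transitivity, and separating isomorphism classes via $\textrm{Aut}(\Gamma)/H\cong\{\mathbf{1},\alpha_{4}\}$. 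The plan is correct; what remains is exactly the case-by-case computation ($i_{0}=5,4,3,0$) that the paper carries out.
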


\textbf{Table 1}

Elementary abelian coverings of the Petersen graphs

\footnotesize
\begin{center}
\begin{tabular}{|c|c|c|c|c|c|c|c|c|c|}
\hline
\multicolumn{1}{|c|}{Covering}&\multicolumn{1}{|c|}{$A$}&\multicolumn{1}{|c|}{$\phi^{t}(x_{1})$}&\multicolumn{1}{|c|}{$\phi^{t}(x_{2})$}
&\multicolumn{1}{|c|}{$\phi^{t}(x_{3})$}&\multicolumn{1}{|c|}{$\phi^{t}(x_{4})$}&\multicolumn{1}{|c|}{$\phi^{t}(x_{5})$}&\multicolumn{1}{|c|}{$\phi^{t}(x_{6})$}
&\multicolumn{1}{|c|}{Condition}&\multicolumn{1}{|c|}{$\text{Admissible}\atop\text{for}$} \\ \hline
$X(2,1)$  &$\mathbb{Z}_{2}$
&$\begin{array}{c}  1 \\  \end{array}$
&$\begin{array}{c}  1 \\  \end{array}$
&$\begin{array}{c}  1 \\  \end{array}$
&$\begin{array}{c}  1 \\  \end{array}$
&$\begin{array}{c}  1 \\  \end{array}$
&$\begin{array}{c}  1 \\  \end{array}$
& &$S_5$\\ \hline
$X'(2,1)$  &$\mathbb{Z}_{2}$
&$\begin{array}{c}  1 \\ \end{array}$
&$\begin{array}{c}  0 \\ \end{array}$
&$\begin{array}{c}  1 \\ \end{array}$
&$\begin{array}{c}  0 \\ \end{array}$
&$\begin{array}{c}  1 \\ \end{array}$
&$\begin{array}{c}  0 \\ \end{array}$
& &$A_5$\\ \hline
$X(2,2)$  &$\mathbb{Z}_{2}^{2}$
&$\begin{array}{c}  1 \\ 0 \\ \end{array}$
&$\begin{array}{c}  0 \\ 1 \\ \end{array}$
&$\begin{array}{c}  1 \\ 0 \\ \end{array}$
&$\begin{array}{c}  0 \\ 1 \\ \end{array}$
&$\begin{array}{c}  1 \\ 0 \\ \end{array}$
&$\begin{array}{c}  0 \\ 1 \\ \end{array}$
&  &$S_5$\\ \hline
$X(5,3)$  &$\mathbb{Z}_{5}^{3}$
&$\begin{array}{c}  1 \\ 0 \\ 0 \end{array}$
&$\begin{array}{c}  1 \\ 3 \\ 4 \end{array}$
&$\begin{array}{c}  0 \\ 0 \\ 1 \end{array}$
&$\begin{array}{c}  3 \\ 1 \\ 1 \end{array}$
&$\begin{array}{c}  0 \\ 1 \\ 0 \end{array}$
&$\begin{array}{c}  1 \\ 4 \\ 3 \end{array}$
& &$S_5$\\ \hline
$X^{\pm}(p,3)$  &$\mathbb{Z}_{p}^{3}$
&$\begin{array}{c}  1 \\ 0 \\ 0 \\ \end{array}$
&$\begin{array}{c}  1 \\ \lambda_{\pm} \\ -1 \\ \end{array}$
&$\begin{array}{c}  0 \\ 0 \\ 1 \\ \end{array}$
&$\begin{array}{c}  \lambda_{\pm} \\ 1 \\ 1 \\ \end{array}$
&$\begin{array}{c}  0 \\ 1 \\ 0 \\ \end{array}$
&$\begin{array}{c}  1 \\ -1 \\ \lambda_{\pm} \\ \end{array}$
&$p\equiv\pm 1\atop \pmod{10}$  &$A_5$\\  \hline
$X(p,6)$  &$\mathbb{Z}_{p}^{6}$
&$\begin{array}{c}  1 \\ 0 \\ 0 \\ 0 \\ 0 \\ 0 \\ \end{array}$
&$\begin{array}{c}  0 \\ 0 \\ 0 \\ 0 \\ 0 \\ 1 \\ \end{array}$
&$\begin{array}{c}  0 \\ 0 \\ 0 \\ 0 \\ 1 \\ 0 \\ \end{array}$
&$\begin{array}{c}  0 \\ 0 \\ 0 \\ 1 \\ 0 \\ 0 \\ \end{array}$
&$\begin{array}{c}  0 \\ 0 \\ 1 \\ 0 \\ 0 \\ 0 \\ \end{array}$
&$\begin{array}{c}  0 \\ 1 \\ 0 \\ 0 \\ 0 \\ 0 \\ \end{array}$
& $p \atop\text{arbitrary}$ &$S_5$\\ \hline
\end{tabular}
\end{center}

\vspace{4mm}

\newpage
\textbf{Table 2}

New families of abelian coverings of the Petersen graph
\footnotesize
\begin{center}
\begin{tabular}{|c|c|c|c|c|c|c|c|c|c|}
\hline
\multicolumn{1}{|c|}{Covering}&\multicolumn{1}{|c|}{$A$}&\multicolumn{1}{|c|}{$\phi^{t}_{1}$}&\multicolumn{1}{|c|}{$\phi^{t}_{2}$}
&\multicolumn{1}{|c|}{$\phi^{t}_{3}$}&\multicolumn{1}{|c|}{$\phi^{t}_{4}$}&\multicolumn{1}{|c|}{$\phi^{t}_{5}$}&\multicolumn{1}{|c|}{$\phi^{t}_{6}$}
&\multicolumn{1}{|c|}{Condition}&\multicolumn{1}{|c|} {$\text{admissible}\atop \text{for}$}\\ \hline
$X_{k,1}(2,6)$  & $\mathbb{Z}_{2^{k-1}}^{5}\times\mathbb{Z}_{2^{k}}$
&$\begin{array}{c}  -1 \\ 0 \\ 0 \\ 0 \\ 0 \\ 1 \\ \end{array}$
&$\begin{array}{c}  -1 \\ 0 \\ 0 \\ 0 \\ 1 \\ 0 \\ \end{array}$
&$\begin{array}{c}  -1 \\ 0 \\ 0 \\ 1 \\ 0 \\ 0 \\ \end{array}$
&$\begin{array}{c}  -1 \\ 0 \\ 1 \\ 1 \\ 0 \\ 0 \\ \end{array}$
&$\begin{array}{c}  -1 \\ 1 \\ 0 \\ 0 \\ 0 \\ 0 \\ \end{array}$
&$\begin{array}{c}  1 \\ 0 \\ 0 \\ 0 \\ 0 \\ 0 \\ \end{array}$
&$k>1$&$S_5$  \\ \hline
$X'_{k,1}(2,6)$  & $\mathbb{Z}_{2^{k-1}}^{5}\times\mathbb{Z}_{2^{k}}$
&$\begin{array}{c}  0 \\ 0 \\ 0 \\ 0 \\ 0 \\ 1 \\ \end{array}$
&$\begin{array}{c}  -1 \\ 0 \\ 0 \\ 0 \\ 1 \\ 0 \\ \end{array}$
&$\begin{array}{c}  0 \\ 0 \\ 0 \\ 1 \\ 0 \\ 0 \\ \end{array}$
&$\begin{array}{c}  -1 \\ 0 \\ 1 \\ 0 \\ 0 \\ 0 \\ \end{array}$
&$\begin{array}{c}  0 \\ 1 \\ 0 \\ 0 \\ 0 \\ 0 \\ \end{array}$
&$\begin{array}{c}  1 \\ 0 \\ 0 \\ 0 \\ 0 \\ 0 \\ \end{array}$
&$k>1$&$A_5$  \\ \hline
$X_{k,2}(2,6)$  & $\mathbb{Z}_{2^{k-1}}^{4}\times\mathbb{Z}_{2^{k}}^{2}$
&$\begin{array}{c}  0 \\ 1 \\ 0 \\ 0 \\ 0 \\ 1 \\ \end{array}$
&$\begin{array}{c}  1 \\ 0 \\ 0 \\ 0 \\ 1 \\ 0 \\ \end{array}$
&$\begin{array}{c}  0 \\ 1 \\ 0 \\ 1 \\ 0 \\ 0 \\ \end{array}$
&$\begin{array}{c}  1 \\ 0 \\ 1 \\ 0 \\ 0 \\ 0 \\ \end{array}$
&$\begin{array}{c}  0 \\ 1 \\ 0 \\ 0 \\ 0 \\ 0 \\ \end{array}$
&$\begin{array}{c}  1 \\ 0 \\ 0 \\ 0 \\ 0 \\ 0 \\ \end{array}$
&$k>1$&$S_5$  \\ \hline
$X^{\pm}_{k,3}(p,3)$  & $\mathbb{Z}_{p^{k}}^{3}$
&$\begin{array}{c}  0 \\ 0 \\ 1 \\ \end{array}$
&$\begin{array}{c}  0 \\ 1 \\ 0 \\ \end{array}$
&$\begin{array}{c}  1 \\ -\lambda_{\mp} \\ -\lambda_{\pm} \\ \end{array}$
&$\begin{array}{c}  \lambda_{\pm} \\ \lambda_{\pm} \\ -\lambda_{\pm} \\ \end{array}$
&$\begin{array}{c}  -\lambda_{\mp} \\ 1 \\ -\lambda_{\pm} \\ \end{array}$
&$\begin{array}{c}  1 \\ 0 \\ 0 \\ \end{array}$
&$k>1$&$A_5$  \\  \hline
$X_{k,3}(5,6)$  & $\mathbb{Z}_{5^{k-1}}^{3}\times\mathbb{Z}_{5^{k}}^{3}$
&$\begin{array}{c}  0 \\ 0 \\ 1 \\ 0 \\ 0 \\ 0 \\ \end{array}$
&$\begin{array}{c}  0 \\ 1 \\ 0 \\ 0 \\ 0 \\ 0 \\ \end{array}$
&$\begin{array}{c}  1 \\ 2 \\ 2 \\ 0 \\ 0 \\ 1 \\ \end{array}$
&$\begin{array}{c}  3 \\ 3 \\ 2 \\ 0 \\ 1 \\ 0 \\ \end{array}$
&$\begin{array}{c}  2 \\ 1 \\ 2 \\ 1 \\ 0 \\ 0 \\ \end{array}$
&$\begin{array}{c}  1 \\ 0 \\ 0 \\ 0 \\ 0 \\ 0 \\ \end{array}$
&$k>1$&$S_5$  \\ \hline
$X_{k,c,3}^{\pm}(p,6)$  &$\mathbb{Z}_{p^{c}}^{3}\times\mathbb{Z}_{p^{k}}^{3}$
&$\begin{array}{c}  0 \\ 0 \\ 1 \\ 0 \\ 0 \\ 0 \\ \end{array}$
&$\begin{array}{c}  0 \\ 1 \\ 0 \\ 0 \\ 0 \\ 0 \\ \end{array}$
&$\begin{array}{c}  1 \\ -\lambda_{\mp} \\ -\lambda_{\pm} \\ 0 \\ 0 \\ 1 \\ \end{array}$
&$\begin{array}{c}  \lambda_{\pm} \\ \lambda_{\pm} \\ -\lambda_{\pm} \\ 0 \\ 1 \\ 0 \\ \end{array}$
&$\begin{array}{c}  -\lambda_{\mp} \\ 1 \\ -\lambda_{\pm} \\ 1 \\ 0 \\ 0 \\ \end{array}$
&$\begin{array}{c}  1 \\ 0 \\ 0 \\ 0 \\ 0 \\ 0 \\ \end{array}$
&$k>c\geqslant 1$&$A_5$  \\ \hline
$X_{k,6}(p,6)$  &$\mathbb{Z}_{p^{k}}^{6}$
&$\begin{array}{c}  1 \\ 0 \\ 0 \\ 0 \\ 0 \\ 0 \\ \end{array}$
&$\begin{array}{c}  0 \\ 1 \\ 0 \\ 0 \\ 0 \\ 0 \\ \end{array}$
&$\begin{array}{c}  0 \\ 0 \\ 1 \\ 0 \\ 0 \\ 0 \\ \end{array}$
&$\begin{array}{c}  0 \\ 0 \\ 0 \\ 1 \\ 0 \\ 0 \\ \end{array}$
&$\begin{array}{c}  0 \\ 0 \\ 0 \\ 0 \\ 1 \\ 0 \\ \end{array}$
&$\begin{array}{c}  0 \\ 0 \\ 0 \\ 0 \\ 0 \\ 1 \\ \end{array}$
&$p\text{\ arbitrary}\atop k>1$& $S_5$  \\ \hline
\end{tabular}
\end{center}

\normalsize

\newpage

\subsection{Proof of Theorem \ref{thm:abelian coverings of Petersen}}

The proof is a good illustration of the method developed in Section 3.3, although some modifications are taken in order to simplify the process.

The trivial pair $(k,\mathbf{0})$ gives rise to $X_{k,6}(p,6)$ for any $p$ and any $k$.

By Theorem \ref{thm:recall},
\begin{align*}
\mathcal{P}_{0}=\{(2,5),(2,4),(5,3)\}\cup\{(p,3)\colon p\equiv\pm 1\pmod{10}\},
\end{align*}
hence
\begin{align}
\mathcal{P}&=\{(2;5),(2;4),(2;4,5),(2;0,4),(2;0,5),(2;0,4,5)\} \nonumber \\
&\cup\{(p;3),(p;0,3)\colon p=5\text{\ or\ }p\equiv\pm 1\pmod{10}\}.
\end{align}

Note that $\alpha_{1},\alpha_{3}, \alpha_{4}\in\textrm{Aut}_{G}(\Gamma;T)$; since $\alpha_{2}$ has order $2$, we can deduce that $\textrm{Aut}_{G}(\Gamma;T)=\langle\alpha_{1},\alpha_{3},\alpha_{4}\rangle$. In the notation of (\ref{eq:tau}),
\begin{align}
\tau^{\alpha_{1}}=(26)(35), \hspace{5mm} \tau^{\alpha_{3}}=(153)(264),\hspace{5mm} \tau^{\alpha_{4}}=(14)(25)(36).
\end{align}

Suppose $(p;k,l;r_{1},\cdots,r_{l};Q,\omega)$ is a $G$-admissible solution. We shall discuss various cases according to $i_{0}$ (the largest $i$ with $r_{i}=0$) and $l$.

Note that, since $\textrm{Aut}(\Gamma)/H\cong\{\bf{1},\alpha_{4}\}$, two solutions $(p;k,l;r_{1},\cdots,r_{l};Q,\omega)$ and $(p;k,l;r_{1},\cdots,r_{l};Q',\omega')$ are isomorphic if and only if $\langle PQ\omega\rangle=\langle PQ'\omega'\rangle$ or $(\alpha_{4})_{\ast}\langle PQ\omega\rangle=\langle PQ'\omega\rangle$, where $P\in\mathbb{Z}_{p^k}^{l,6},P_{i,j}=\delta_{i,j}\cdot p^{r_{i}}$.

Remark \ref{rmk:from solution to covering} tells us how to explicitly construct the corresponding covering from a solution. And for each covering, it is easy to check whether it is $S_{5}$-admissible by testing whether it is possible to lift $\alpha_{4}$, in a routine way.

\subsubsection{$i_{0}=5$}

Then $p=2$, $l=5$, $0=r_{1}=\cdots =r_{5}<r_{6}=k$.

First, we show that $\Sigma_{6}(5)=\{[\textrm{id}]\}$. Each element of $\Sigma_{6}$ is equivalent to some $\omega$ with $\omega^{-1}(1)<\cdots <\omega^{-1}(5)$; there are 6 such elements
$$\textrm{id}, \omega_{1}=(654321), \omega_{2}=(65432), \omega_{3}=(6543), \omega_{4}=(654), \omega_{5}=(65).$$
It is easy to verify that $\textrm{id}\overset{\alpha_{4}}{\sim}\omega_{3}\overset{\alpha_{3}}{\sim}\omega_{1}\overset{\alpha_{4}}{\sim}\omega_{4}\overset{\alpha_{3}}{\sim}\omega_{2}\overset{\alpha_{4}}{\sim}\omega_{5}$
, where by $\omega\overset{\alpha}{\sim}\omega'$ we mean (\ref{eq:equivalence}) holds for $\beta=\alpha$.

Thus $\Sigma_{6}(5)=\{[\textrm{id}]\}$ and we can assume $\omega=\textrm{id}$.

In the notation of Remark \ref{rmk:congruence equation}, suppose $\Theta=(q_{1},\cdots,q_{5})^{t}$. The equation (\ref{eq:congruence equation}) for $\alpha=\alpha_{1},\alpha_{2},\alpha_{3}$ can be written as, respectively,
\begin{align}
\left( \begin{array}{ccccc}
      -1 & -q_{1} & 0 & 0 & 0 \\
      0 & -q_{2} & 0 & 0 & 0 \\
      0 & -q_{3} & 0 & 0 & -1 \\
      0 & -q_{4} & 0 & -1 & 0 \\
      0 & -q_{5} & -1 & 0 & 0 \\
       \end{array}
\right) \left(
       \begin{array}{c} q_{1} \\ q_{2} \\ q_{3} \\  q_{4} \\ q_{5} \\
       \end{array}
\right) &=\left(
       \begin{array}{c} 0 \\ -1 \\ 0 \\ 0 \\ 0 \\ \end{array}\right), \label{eq:(1.1)-1} \\
\left( \begin{array}{ccccc}
      q_{1} & 0 & 0 & 0 & -1-q_{1} \\
      q_{2} & -1 & 0 & 0 & -q_{2} \\
      q_{3} & 1 & 1 & -1 & -q_{3} \\
      q_{4} & 0 & 0 & -1 & -q_{4} \\
      -1+q_{5} & 0 & 0 & 0 & -q_{5} \\
       \end{array}
\right) \left(
       \begin{array}{c} q_{1} \\ q_{2} \\ q_{3} \\ q_{4} \\ q_{5} \\
       \end{array}
\right) &= -\left(
       \begin{array}{c} q_{1} \\ q_{2} \\ q_{3} \\ q_{4} \\ q_{5} \\ \end{array}\right), \label{eq:(1.1)-2} \\
\left( \begin{array}{ccccc}
      0 & q_{1} & -1 & 0 & 0 \\
      0 & q_{2} & 0 & -1 & 0 \\
      0 & q_{3} & 0 & 0 & 1 \\
      0 & q_{4} & 0 & 0 & 0 \\
      -1 & q_{5} & 0 & 0 & 0 \\
       \end{array}
\right) \left(
       \begin{array}{c} q_{1} \\ q_{2} \\ q_{3} \\ q_{4} \\ q_{5} \\
       \end{array}
\right) &=\left(
       \begin{array}{c} 0 \\ 0 \\ 0 \\ -1 \\ 0\\ \end{array}\right), \label{eq:(1.1)-3}
\end{align}
which hold in $\mathbb{Z}_{2^{k}}$.

From (\ref{eq:(1.1)-1})-(\ref{eq:(1.1)-3}) one can deduce that $k=1$, $q_{2}=q_{4}=1$, $q_{1}=q_{3}=q_{5}$. Hence there are two possibilities: $\Theta=(1,1,1,1,1)^{t}$ or $\Theta=(0,1,0,1,0)^{t}$.
Let
\begin{align}
Q_{1}=\left(
        \begin{array}{cccccc}
          1 & 0 & 0 & 0 & 0 & 1 \\
          0 & 1 & 0 & 0 & 0 & 1 \\
          0 & 0 & 1 & 0 & 0 & 1 \\
          0 & 0 & 0 & 1 & 0 & 1 \\
          0 & 0 & 0 & 0 & 1 & 1 \\
          0 & 0 & 0 & 0 & 0 & 1 \\
        \end{array}
      \right),\hspace{5mm}
Q'_{1}=\left(
        \begin{array}{cccccc}
          1 & 0 & 0 & 0 & 0 & 0 \\
          0 & 1 & 0 & 0 & 0 & 1 \\
          0 & 0 & 1 & 0 & 0 & 0 \\
          0 & 0 & 0 & 1 & 0 & 1 \\
          0 & 0 & 0 & 0 & 1 & 0 \\
          0 & 0 & 0 & 0 & 0 & 1 \\
        \end{array}
      \right).
\end{align}
It is clear that the solution $(2;1,5;0,0,0,0,0;Q_{1},\textrm{id})$ gives rise to $X(2,1)$, and $(2;1,5;0,0,0,0,0;Q'_{1},\textrm{id})$ gives rise to a covering isomorphic to $X'(2,1)$.

\subsubsection{$i_{0}=4$}

Then $p=2$, $l=4$ or $5$, and $0=r_{1}=\cdots =r_{4}<r_{5}\leqslant r_{6}=k$.

(a) When $l=4$, $r_{5}=r_{6}$.

$\Sigma_{6}(4)=\{[\textrm{id}],[(45)],[(354)]\}$. Let $\Theta=(q_{ij})_{4\times 2}$.

(i) When $\omega=\textrm{id}$.

We have the following equations in $\mathbb{Z}_{2^{k}}$, obtained from (\ref{eq:congruence equation}):
\begin{align}
\left(
   \begin{array}{cccc}
     -1 & -q_{12} & -q_{11} & 0 \\
     0 & -q_{22} & -q_{21} & 0 \\
     0 & -q_{32} & -q_{31} & 0 \\
     0 & -q_{42} & -q_{41} & -1 \\
   \end{array}
 \right)\left(
          \begin{array}{cc}
            q_{11} & q_{12} \\
            q_{21} & q_{22} \\
            q_{31} & q_{32} \\
            q_{41} & q_{42} \\
          \end{array}
        \right)&=\left(
                  \begin{array}{cc}
                    0 & 0 \\
                    0 & -1 \\
                    -1 & 0 \\
                    0 & 0 \\
                  \end{array}
                \right)
, \label{eq:(2.1)-1}
\\
\left(
   \begin{array}{cccc}
     q_{12}-q_{11} & 0 & 0 & 0 \\
     q_{22}-q_{21} & -1 & 0 & 0 \\
     q_{32}-q_{31} & 1 & 1 & -1 \\
     q_{42}-q_{41} & 0 & 0 & -1 \\
   \end{array}
 \right)\left(
          \begin{array}{cc}
            q_{11} & q_{12} \\
            q_{21} & q_{22} \\
            q_{31} & q_{32} \\
            q_{41} & q_{42} \\
          \end{array}
        \right)&=\left(
                  \begin{array}{cc}
                    -1-q_{12} & -q_{12} \\
                    -q_{22} & -q_{22} \\
                    -q_{32} & -q_{32} \\
                    -q_{42} & -q_{42} \\
                  \end{array}
                \right)
,\label{eq:(2.1)-2}
\\
\left(
   \begin{array}{cccc}
     -q_{11} & q_{12} & -1 & 0 \\
     -q_{21} & q_{22} & 0 & -1 \\
     -q_{31} & q_{32} & 0 & 0 \\
     -q_{41} & q_{42} & 0 & 0 \\
   \end{array}
 \right)\left(
          \begin{array}{cc}
            q_{11} & q_{12} \\
            q_{21} & q_{22} \\
            q_{31} & q_{32} \\
            q_{41} & q_{42} \\
          \end{array}
        \right)&=\left(
                  \begin{array}{cc}
                    0 & 0 \\
                    0 & 0 \\
                    1 & 0 \\
                    0 & -1 \\
                  \end{array}
                \right)
.\label{eq:(2.1)-3}
\end{align}

Use (\ref{eq:(2.1)-2})-(1,1) to denote the equation obtained by comparing the (1,1)-entries of (\ref{eq:(2.1)-2}) and so on.

We have the following deductions: \\
(\ref{eq:(2.1)-2})-(1,1),(\ref{eq:(2.1)-2})-(2,2),(\ref{eq:(2.1)-3})-(1,1), (\ref{eq:(2.1)-3})-(1,2)$\Rightarrow$ $q_{32}-q_{31}=1+q_{12}$; \\
(\ref{eq:(2.1)-1})-(1,1), (\ref{eq:(2.1)-1})-(1,2) $\Rightarrow$ $q_{12}(1+q_{11})=0$; \\
(\ref{eq:(2.1)-2})-(1,1), (\ref{eq:(2.1)-2})-(1,2)$\Rightarrow$ $q_{11}^{2}=1, q_{12}^{2}=-2q_{12},(q_{12}-q_{11})^2=1$;  \\
(\ref{eq:(2.1)-2})-(2,2)$\Rightarrow$ $q_{12}(q_{22}-q_{21})=0$;\\
(\ref{eq:(2.1)-3})-(2,1), (\ref{eq:(2.1)-3})-(2,2)$\Rightarrow$ $q_{42}-q_{41}=q_{21}q_{11}-q_{22}q_{21}-q_{21}q_{12}+q_{22}^2$; \\
(\ref{eq:(2.1)-2})-(4,2)$\Rightarrow$ $0=q_{12}(q_{42}-q_{41})=q_{12}(q_{21}q_{11}-q_{22}q_{21}-q_{21}q_{12}+q_{22}^2)=q_{12}q_{21}(q_{11}-q_{12})+q_{22}q_{12}(q_{22}-q_{21})=q_{12}q_{21}(q_{11}-q_{12})\Rightarrow q_{12}q_{21}=0,q_{12}q_{22}=0$;\\
(\ref{eq:(2.1)-3})-(1,1)$\Rightarrow$ $q_{31}=-1$; \\
(\ref{eq:(2.1)-3})-(1,2)$\Rightarrow$ $q_{32}=q_{12}$; \\
(\ref{eq:(2.1)-1})-(2,2)$\Rightarrow$ $q_{22}^{2}=1$, $q_{12}=0$; \\
(\ref{eq:(2.1)-3})-(2,2)$\Rightarrow$ $q_{42}=1$; \\
(\ref{eq:(2.1)-3})-(3,2)$\Rightarrow$ $q_{32}=0$; \\
(\ref{eq:(2.1)-3})-(4,2)$\Rightarrow$ $q_{22}=-1$; \\
(\ref{eq:(2.1)-1})-(4,1)$\Rightarrow$ $q_{21}=0$; \\
(\ref{eq:(2.1)-2})-(2,1)$\Rightarrow$ $q_{11}=-1$; \\
(\ref{eq:(2.1)-2})-(3,1)$\Rightarrow$ $q_{41}=0$; \\
(\ref{eq:(2.1)-2})-(3,2)$\Rightarrow$ $2=0$ $\Rightarrow k=1$.

Thus the solution is $(2;1,4;0,0,0,0;Q_{2},\textrm{id})$ with
\begin{align}
Q_{2}=\left(
           \begin{array}{cccccc}
               1 & 0 & 0 & 0 & 1 & 0 \\
               0 & 1 & 0 & 0 & 0 & 1 \\
               0 & 0 & 1 & 0 & 1 & 0 \\
               0 & 0 & 0 & 1 & 0 & 1 \\
               0 & 0 & 0 & 0 & 1 & 0 \\
               0 & 0 & 0 & 0 & 0 & 1 \\
           \end{array}
       \right).
\end{align}
It can be checked that the $S_{5}$-admissible covering determined by this solution is isomorphic to $X(2,2)$.

(ii) When $\omega=(45)$.

The equation (\ref{eq:congruence equation}) over $\mathbb{Z}_{2^{k}}$ for $\alpha=\alpha_{1}, \alpha_{2}$ reads respectively
\begin{align}
\left(
  \begin{array}{cccc}
    -1 & -q_{12} & 0 & 0 \\
    0 & -q_{22} & 0 & 0 \\
    0 & -q_{32} & 0 & -1 \\
    0 & -q_{42} & -1 & 0 \\
  \end{array}
  \right)\left(
         \begin{array}{cc}
           q_{11} & q_{12} \\
           q_{21} & q_{22} \\
           q_{31} & q_{32} \\
           q_{41} & q_{42} \\
         \end{array}
       \right)
  &=\left(
   \begin{array}{cc}
     -q_{11} & 0 \\
     -q_{21} & -1 \\
     -q_{31} & 0 \\
     -q_{41} & 0 \\
   \end{array}
   \right), \label{eq:(2.1)-ii-1} \\
\left(
  \begin{array}{cccc}
    q_{12} & 0 & 0 & -1-q_{12} \\
    q_{22} & -1 & 0 & -q_{22} \\
    q_{32} & 1 & 1 & -q_{32} \\
    q_{42}-1 & 0 & 0 & -q_{42} \\
  \end{array}
  \right)\left(
         \begin{array}{cc}
           q_{11} & q_{12} \\
           q_{21} & q_{22} \\
           q_{31} & q_{32} \\
           q_{41} & q_{42} \\
         \end{array}
       \right)
  &=\left(
   \begin{array}{cc}
     -q_{11} & -q_{12} \\
     -q_{21} & -q_{22} \\
     -1-q_{31} & -q_{32} \\
     -q_{41} & -q_{42} \\
   \end{array}
 \right). \label{eq:(2.1)-ii-2}
\end{align}
(\ref{eq:(2.1)-ii-1})-(2,2)$\Rightarrow$ $q_{22}^{2}=1$; \\
(\ref{eq:(2.1)-ii-2})-(2,2)$\Rightarrow$ $q_{22}(q_{12}-q_{42})=0\Rightarrow q_{12}=q_{42}$; \\
(\ref{eq:(2.1)-ii-2})-(3,2)$\Rightarrow$ $q_{22}+2q_{32}=0$. \\
This leads to contradiction. Hence in this case there is no solution.

(iii) When $\omega=(354)$.

Similarly, the equations (\ref{eq:congruence equation}) for $\alpha=\alpha_{1}, \alpha_{3}$ lead to contradiction.

\vspace{4mm}

(b) When $l=5$, $r_{5}<r_{6}=k$.

It turns out that $\Sigma_{6}(4,5)=\{[\textrm{id}], [(45)], [(354)]\}=\Sigma_{6}(4)$, so we still only need to consider $\omega=\textrm{id}, (45), (354)$.

Suppose $Q_{i,j}=q_{ij}, (i=1,\cdots,4,j=5,6)$, $Q_{5,6}=q$. From Proposition \ref{prop:key2} and the result of (2.1), it follows that $\omega=\textrm{id}$, $r_{5}=1$ and $Q\pmod{2}=Q_{2}$; by Proposition \ref{prop:key2} and the result of Section 4.3.1, we have $r_{6}=1$ and
$$\left(
   \begin{array}{ccccc}
     1 & 0 & 0 & 0 & -q_{11} \\
     0 & 1 & 0 & 0 & -q_{21} \\
     0 & 0 & 1 & 0 & -q_{31} \\
     0 & 0 & 0 & 1 & -q_{41} \\
     0 & 0 & 0 & 0 & 1 \\
   \end{array}
 \right)\left(
          \begin{array}{c}
            q_{12} \\
            q_{22} \\
            q_{32} \\
            q_{42} \\
            q \\
          \end{array}
        \right)
=\left(
   \begin{array}{c}
     1 \\
     0 \\
     1 \\
     0 \\
     1 \\
   \end{array}
 \right) \text{\ or\ }
\left(
   \begin{array}{c}
     1 \\
     1 \\
     1 \\
     1 \\
     1 \\
   \end{array}
 \right).$$
Hence $q_{11}=q_{31}=q=1$, $q_{21}=q_{41}=0$, $q_{12}, q_{32}\in\{0,2\}, q_{22}, q_{42}\in\{1,3\}$.
The equations $(QS^{\alpha}Q^{-1})_{i,6}\equiv 0\pmod{4}$ for $1\leqslant i\leqslant 4$ and $\alpha=\alpha_{1},\alpha_{2},\alpha_{3}$ imply
\begin{align*}
&\left(
    \begin{array}{c}
      q_{12}+q_{32}+q_{12}q_{22}-2 \\
      q_{22}^{2}-1 \\
      q_{32}(1+q_{22}) \\
      q_{42}(1+q_{22}) \\
    \end{array}
  \right)
\equiv\left(
    \begin{array}{c}
      q_{12}+q_{32}+q_{12}q_{22}-2 \\
      q_{42}-q_{22}^{2} \\
      q_{12}-q_{22}q_{32}-2 \\
      -1-q_{22}q_{42} \\
    \end{array}
  \right) \\
\equiv &\left(
    \begin{array}{c}
      1-(1-q_{12})^{2} \\
      q_{22}(2-q_{12}) \\
      q_{12}-q_{22}q_{32}-2 \\
      q_{12}-q_{22}+q_{42}-q_{12}q_{32} \\
    \end{array}
  \right)
\equiv\left(
    \begin{array}{c}
      0 \\
      0 \\
      0 \\
      0 \\
    \end{array}
  \right),
\end{align*}
which is impossible to hold, as can be checked easily. Thus there is no solution.

\subsubsection{$i_{0}=3$}

Then $p=5$, or $p\equiv\pm 1\pmod{10}$,
$l=3$, $0=r_{1}=r_{2}=r_{3}<r_{4}=r_{5}=r_{6}=k$, $\Sigma_{6}(3)=\{[(14253)], [(34)], [(2453)]\}$. Let $\Theta=(q_{ij})_{3\times 3}$.

(i) $\omega=(14253)$.

We have the following equations over $\mathbb{Z}_{p^{k}}$:
\begin{align}
\left(
  \begin{array}{ccc}
    0 & 0 & 1 \\
    0 & 1 & 0 \\
    1 & 0 & 0 \\
  \end{array}
\right)\left(
         \begin{array}{ccc}
           q_{11} & q_{12} & q_{13} \\
           q_{21} & q_{22} & q_{23} \\
           q_{31} & q_{32} & q_{33} \\
         \end{array}
       \right)
&=\left(
  \begin{array}{ccc}
    q_{11} & q_{13} & q_{12} \\
    q_{21} & q_{23} & q_{22} \\
    q_{31} & q_{33} & q_{32} \\
  \end{array}
\right),\label{eq:(3)-i-1}\\
\left(
  \begin{array}{ccc}
    1 & -1 & -q_{11}-q_{13} \\
    0 & -1 & -q_{21}-q_{23} \\
    0 & 0  & -q_{31}-q_{33} \\
  \end{array}
\right)\left(
         \begin{array}{ccc}
           q_{11} & q_{12} & q_{13} \\
           q_{21} & q_{22} & q_{23} \\
           q_{31} & q_{32} & q_{33} \\
         \end{array}
       \right)
&=\left(
  \begin{array}{ccc}
    q_{13} & 1-q_{12} & -q_{13} \\
    q_{23} & -q_{22} & -q_{23} \\
    q_{33}-1 & -q_{32} & -q_{33} \\
  \end{array}
\right),\label{eq:(3)-i-2} \\
\left(
  \begin{array}{ccc}
    -q_{11} & -q_{12} & 1 \\
    -q_{21} & -q_{22} & 0 \\
    -q_{31} & -q_{32} & 0 \\
  \end{array}
\right)\left(
         \begin{array}{ccc}
           q_{11} & q_{12} & q_{13} \\
           q_{21} & q_{22} & q_{23} \\
           q_{31} & q_{32} & q_{33} \\
         \end{array}
       \right)
&=\left(
  \begin{array}{ccc}
    0 & q_{13} & 0 \\
    0 & q_{23} & -1 \\
   -1 & q_{33} & 0 \\
  \end{array}
\right) \label{eq:(3)-i-3}.
\end{align}
(\ref{eq:(3)-i-1}) $\Rightarrow$ $q_{31}=q_{11}$, $q_{32}=q_{13}$, $q_{33}=q_{12}$, $q_{23}=q_{22}$; \\
(\ref{eq:(3)-i-3})-(3,2),(3,3)$\Rightarrow$ $q_{11}(q_{13}-q_{12})=q_{12}, q_{13}(q_{11}+q_{22})=0$; \\
(\ref{eq:(3)-i-3})-(1,2)$\Rightarrow$ $q_{12}(q_{11}+q_{22})=0$; \\
(\ref{eq:(3)-i-3})-(2,3)$\Rightarrow$ $q_{21}q_{13}+q_{22}^{2}=1$; \\
(\ref{eq:(3)-i-2})-(1,3)$\Rightarrow$ $2q_{13}=q_{22}+q_{12}(q_{11}+q_{13})\Rightarrow$ $q_{12}$ or $q_{13}$ is invertible $\Rightarrow q_{22}=-q_{11}$; \\
(\ref{eq:(3)-i-2})-(2,2), (2,3)$\Rightarrow$ $(q_{21}+q_{23})q_{13}=0=(q_{21}+q_{23})q_{12}\Rightarrow q_{21}=-q_{23}=q_{11}$; \\
(\ref{eq:(3)-i-3})-(1,1)$\Rightarrow$ $q_{12}=1-q_{11}$; \\
(\ref{eq:(3)-i-3})-(2,3), (\ref{eq:(3)-i-2})-(1,3)$\Rightarrow$ $q_{13}=-1$. \\
Thus there is a solution if and only $q_{11}^{2}-q_{11}-1=0$, which is equivalent to
\begin{align}
x^{2}=5, \label{eq:quadratic}
\end{align}
with $x=2q_{11}-1$.

If $p=5$, it is easy to see that (\ref{eq:quadratic}) has a solution if and only if $k=1$.

If $p\neq 5$,
by Proposition 5.1.1 of \cite{GTM84} (page 50), the equation (\ref{eq:quadratic}) has a solution in $\mathbb{Z}_{p^{k}}$ if and only if the Legendre symbol $(5/p)$ is equal to 1, which turns out to be equivalent to $p\equiv \pm 1\pmod{10}$, and then there are exactly two solutions. Let  $\lambda_{\pm}\in\mathbb{Z}_{p^{k}}$ be the two solutions of $\lambda^{2}-\lambda-1=0$.

The solutions are $(5;1,3;0,0,0;Q_{3},(14253))$ and $(p;k,3;0,0,0;Q_{3}^{\pm},(14253))$ ($p\equiv\pm 1\pmod{10}, k\geqslant 1$), with
\begin{align}
Q_{3}=\left(
        \begin{array}{cccccc}
          1 & 0 & 0 & 3 & 3 & 4 \\
          0 & 1 & 0 & 3 & 2 & 2 \\
          0 & 0 & 1 & 3 & 4 & 3 \\
          0 & 0 & 0 & 1 & 0 & 0 \\
          0 & 0 & 0 & 0 & 1 & 0 \\
          0 & 0 & 0 & 0 & 0 & 1 \\
        \end{array}
      \right)\in\mathbb{Z}_{5}^{6,6},
\end{align}
\begin{align}
Q_{3}^{\pm}=\left(
        \begin{array}{cccccc}
          1 & 0 & 0 & \lambda_{\pm} & \lambda_{\mp} & -1 \\
          0 & 1 & 0 & \lambda_{\pm} & -\lambda_{\pm} & -\lambda_{\pm} \\
          0 & 0 & 1 & \lambda_{\pm} & -1 & \lambda_{\mp} \\
          0 & 0 & 0 & 1 & 0 & 0 \\
          0 & 0 & 0 & 0 & 1 & 0 \\
          0 & 0 & 0 & 0 & 0 & 1 \\
        \end{array}
      \right)\in\mathbb{Z}_{p^{k}}^{6,6}.
\end{align}

The solution $(5;1,3;0,0,0;Q_{3},(14253))$ gives rise to a covering isomorphic to $X(5,3)$, and the solution $(p;k,3;0,0,0;Q_{3}^{\pm},(14253))$ gives rise to $X^{\pm}_{k,3}(p,3)$ which can be verified to be not 3-arc-transitive; also, $X^{\pm}_{1,3}(p,3)$ is isomorphic to $X^{\pm}(p,3)$.

(ii) $\omega=(34)$.
\begin{align}
\left(
  \begin{array}{ccc}
    1 & q_{13} & 0 \\
    0 & q_{23} & 0 \\
    0 & q_{33} & -1 \\
  \end{array}
\right)\left(
         \begin{array}{ccc}
           q_{11} & q_{12} & q_{13} \\
           q_{21} & q_{22} & q_{23} \\
           q_{31} & q_{32} & q_{33} \\
         \end{array}
       \right)
&=\left(
  \begin{array}{ccc}
    q_{12} & q_{11} & 0 \\
    q_{22} & q_{21} & 1 \\
    q_{32} & q_{31} & 0 \\
  \end{array}
\right), \label{eq:(3)-ii-1} \\
\left(
  \begin{array}{ccc}
    q_{12}-q_{13} & -q_{11} & q_{11} \\
    q_{22}-q_{23} & 1-q_{21} & -q_{21} \\
    q_{32}-q_{33} & -q_{31} & q_{31}+1 \\
  \end{array}
\right)\left(
         \begin{array}{ccc}
           q_{11} & q_{12} & q_{13} \\
           q_{21} & q_{22} & q_{23} \\
           q_{31} & q_{32} & q_{33} \\
         \end{array}
       \right)
&=\left(
  \begin{array}{ccc}
    -q_{11} & q_{13}+1 & q_{13} \\
    -q_{21} & q_{23} & q_{23} \\
    -q_{31} & q_{33} & q_{33} \\
  \end{array}
\right), \label{eq:(3)-ii-2} \\
\left(
  \begin{array}{ccc}
    -q_{12} & q_{13} & 0 \\
    -q_{22} & q_{23} & -1 \\
    -q_{32} & q_{33} & 0 \\
  \end{array}
\right)\left(
         \begin{array}{ccc}
           q_{11} & q_{12} & q_{13} \\
           q_{21} & q_{22} & q_{23} \\
           q_{31} & q_{32} & q_{33} \\
         \end{array}
       \right)
&=\left(
  \begin{array}{ccc}
   -1 & q_{11} & 0 \\
    0 & q_{21} & 0 \\
    0 & q_{31} & -1 \\
  \end{array}
\right). \label{eq:(3)-ii-3}
\end{align}
(\ref{eq:(3)-ii-1})-(2,3)$\Rightarrow$ $q_{23}=\pm 1$.

If $q_{23}=1$. \\
(\ref{eq:(3)-ii-1})-(1,3)$\Rightarrow$ $q_{13}=0$; \\
(\ref{eq:(3)-ii-1})-(1,2)$\Rightarrow$ $q_{22}=q_{11}$; \\
(\ref{eq:(3)-ii-3})-(1,1),(1,2)$\Rightarrow$ $q_{11}=-1$; \\
(\ref{eq:(3)-ii-2})-(1,3)$\Rightarrow$ $q_{33}=q_{23}=1$;\\
(\ref{eq:(3)-ii-3})-(3,3)$\Rightarrow 1=-1$, a contradiction;

if $q_{23}=-1$. \\
(\ref{eq:(3)-ii-1})-(2,1)$\Rightarrow$ $q_{22}=-q_{21}$; \\
(\ref{eq:(3)-ii-1})-(3,3)$\Rightarrow$ $q_{33}=0$; \\
(\ref{eq:(3)-ii-1})-(3,2)$\Rightarrow$ $q_{31}=-q_{32}$; \\
(\ref{eq:(3)-ii-3})-(3,3)$\Rightarrow$ $q_{32}q_{13}=1$; \\
(\ref{eq:(3)-ii-2})-(3,3)$\Rightarrow$ $q_{32}=1$;\\
(\ref{eq:(3)-ii-2})-(3,1)$\Rightarrow$ $q_{22}=q_{11}-1$; \\
(\ref{eq:(3)-ii-2})-(3,2)$\Rightarrow$ $q_{12}=-q_{22}$; \\
(\ref{eq:(3)-ii-2})-(1,3)$\Rightarrow$ $0=1$, a contradiction.

Thus there is no solution.

(iii) $\omega=(2453)$.
\begin{align}
\left(
  \begin{array}{ccc}
    1 & 0 & 0 \\
    0 & 0 & 1 \\
    0 & 1 & 0 \\
  \end{array}
\right)\left(
         \begin{array}{ccc}
           q_{11} & q_{12} & q_{13} \\
           q_{21} & q_{22} & q_{23} \\
           q_{31} & q_{32} & q_{33} \\
         \end{array}
       \right)
&=\left(
  \begin{array}{ccc}
    q_{13} & q_{12} & q_{11} \\
    q_{23} & q_{22} & q_{21} \\
    q_{33} & q_{32} & q_{31} \\
  \end{array}
\right),\label{eq:(3)-iii-1}\\
\left(
  \begin{array}{ccc}
    -q_{13} & 0 & 1+q_{13} \\
    -q_{23} & -1 & q_{13} \\
    1-q_{33} & 0 & q_{33} \\
  \end{array}
\right)\left(
         \begin{array}{ccc}
           q_{11} & q_{12} & q_{13} \\
           q_{21} & q_{22} & q_{23} \\
           q_{31} & q_{32} & q_{33} \\
         \end{array}
       \right)
&=\left(
  \begin{array}{ccc}
    q_{11} & q_{12} & q_{13} \\
    q_{21}-1 & 1+q_{22} & q_{23} \\
    q_{31} & q_{32} & q_{33} \\
  \end{array}
\right),\label{eq:(3)-iii-2}\\
\left(
  \begin{array}{ccc}
    0 & -1 & 0 \\
    0 & 0 & 1 \\
    -1 & 0 & 0 \\
  \end{array}
\right)\left(
         \begin{array}{ccc}
           q_{11} & q_{12} & q_{13} \\
           q_{21} & q_{22} & q_{23} \\
           q_{31} & q_{32} & q_{33} \\
         \end{array}
       \right)
&=\left(
  \begin{array}{ccc}
    q_{13} & -q_{11} & -q_{12} \\
    q_{23} & -q_{21} & -q_{22} \\
    q_{33} & -q_{31} & -q_{32} \\
  \end{array}
\right). \label{eq:(3)-iii-3}
\end{align}
(\ref{eq:(3)-iii-2})-(1,3)$\Rightarrow$ $(q_{13}+1)(q_{13}-q_{33})=0$; \\
(\ref{eq:(3)-iii-2})-(2,1)$\Rightarrow$ $2q_{21}-1=q_{13}q_{31}-q_{11}q_{23}$; \\
(\ref{eq:(3)-iii-1})$\Rightarrow$ $q_{13}=q_{11}$, $q_{23}=q_{31}$, $q_{21}=q_{33}$, hence $q_{21}=1/2$, $(q_{11}+1)(q_{11}-1/2)=0$. But (\ref{eq:(3)-iii-3})-(1,1)$\Rightarrow$ $q_{13}=-q_{21}=-1/2$. So there is no solution.

\subsubsection{$i_{0}=0$}

By Remark \ref{rmk:reduction} and the previous results, the solutions are as follows: \\
$(2;k,5;k-1,k-1,k-1,k-1,k-1;Q_{1},\textrm{id})$, giving rise to $X_{k,1}(2,6)$; \\
$(2;k,5;k-1,k-1,k-1,k-1,k-1;Q'_{1},\textrm{id}), k>1$, giving rise to $X'_{k,1}(2,6)$; \\
$(2;k,4;k-1,k-1,k-1,k-1;Q_{2},\textrm{id}), k>1$, giving rise to $X_{k,2}(2,6)$; \\
$(5;k,3;k-1,k-1,k-1;Q_{3},(14253)), k>1$, giving rise to $X_{k,3}(5,6)$; \\
$(p;k,3;c,c,c;Q^{\pm}_{3},(14253)), k>1, p\equiv\pm 1\pmod{10}$, giving rise to $X^{\pm}_{k,c,3}(p,6)$. \\


\begin{thebibliography}{}

\bibitem{chm}
H.M. Chen,
Lifting automorphisms of abelian regular coverings of graphs,
arXiv:1110.5038v3.

\bibitem{Crite}
S.F. Du, J.H. Kwak, M.Y. Xu,
Linear criteria for lifting automorphisms of elementary abelian regular coverings,
Linear Algebra and its Applications 373 (2003) 101-119.

\bibitem{Q3}
Y.Q. Feng, K.S. Wang,
$s$-Regular cyclic coverings of the three-dimensional cube $Q_{3}$,
European Journal of Combinatorics 24 (2003) 719-731.


\bibitem{2-arc}
S.F. Du, J.H. Kwak, M.Y. Xu,
2-Arc-transitive regular covers of complete graphs having the covering transformation group $\mathbb{Z}_{p}^{3}$,
Journal of Combinatorial Theory, Series B 93 (2005) 73-93.

\bibitem{Voltage}
J.L. Gross, T.W. Tucker,
Generating all graph coverings by permutation voltage assignments,
Discrete Math. 18 (1977) 273-283.

\bibitem{Graph}
J.L. Gross, T.W. Tucker,
Topological Graph Theory,
Wiley-Interscience, New York, 1987.

\bibitem{Enum}
S. Hong, J.H. Kwak, J. Lee,
Regular graph coverings whose covering transformation groups have the isomorphism extension property,
Discrete Mathematics 148 (1996) 85-105.

\bibitem{GTM84}
K. Ireland, M. Rosen,
A Classical Introduction to Modern Number Theory,
Graduate Texts in Mathematics Vol 84, 2nd edition, Springer-Verlag, New York, 1990.

\bibitem{K5}
B. Kuzman,
Arc-transitive elementary abelian covers of the complete graph $K_{5}$,
Linear Algebra and its Applications, 433 (2010) 1909-1921.


\bibitem{Lift}
A. Malni$\check{\textup{c}}$,
Group actions, coverings and lifts of automorphisms,
Discrete Math. 182 (1998) 203-218.

\bibitem{Elementary}
A. Malni$\check{\textup{c}}$, D. Maru$\check{s}$i$\check{c}$, P. Poto$\check{\textup{c}}$nik,
Elementary abelian covers of graphs,
Journal of Algebraic Combinatorics 20 (2004) 71-97.

\bibitem{Vertex}
A. Malni$\check{\textup{c}}$, P. Poto$\check{\textup{c}}$nik,
Invariant subspaces, duality, and covers of the Petersen graph,
European Journal of Combinatorics 27 (2006) 971-989.

\bibitem{Pappus}
J.M. Oh,
Arc-transitive elementary abelian covers of the Pappus graph,
Disrete Mathematics, 309 (2009) 6590-6611.

\bibitem{Heawood}
J.X. Zhou, Y.Q. Feng,
Semisymmetric elementary abelian covers of the Heawood graph,
Discrete Mathemetics, 310 (2010) 3658-3662.
\end{thebibliography}
\end{document}